\documentclass[12pt]{amsart}
\DeclareFontFamily{OML}{script}{}
\DeclareFontShape{OML}{script}{m}{it}
{ <5-20> rsfs10 }{}
\DeclareMathAlphabet{\mathscript}{OML}{script}{m}{it}

\renewcommand{\mathcal}[1]{{\mathscript #1}\hspace{0.2ex}}
\usepackage{color}
\ifx\red\undefined
\newcommand{\red}{\color{red}}
\fi

\usepackage{graphicx,graphics}
\usepackage{pifont}
\usepackage{amsthm}

\usepackage{amscd}
\usepackage{amsmath}
\usepackage{latexsym}
\usepackage{amsfonts}
\usepackage{amssymb}
\usepackage{color}

\textwidth=6.0in \textheight=8.45in
\oddsidemargin=0.2in
\evensidemargin=0.2in
\topmargin=0.0in

\hfuzz=4mm \vfuzz=4mm
\hbadness=20000 \vbadness=20000


\ifx\text\undefined
\newcommand{\text}{\mbox}
\fi
\ifx\operatorname\undefined
\newcommand{\operatorname}{\mathop}
\fi
\newcommand\be{\begin{equation}}
\newcommand\ee{\end{equation}}
\newcommand\bea{\begin{eqnarray}}
\newcommand\eea{\end{eqnarray}}
\newcommand\beaa{\begin{eqnarray*}}
\newcommand\eeaa{\end{eqnarray*}}

\newenvironment{eqa}{\begin{equation}%
  \begin{array}{rcl}}{\end{array}\end{equation}}
\newcommand\beqa{\begin{eqa}}
\newcommand\eeqa{\end{eqa}}

\numberwithin{equation}{section}

\newtheorem{thm}{Theorem}[section]

\newtheorem{lem}{Lemma}[section]
\newtheorem{defn}{Definition}[section]
\newtheorem{rem}{Remark}[section]
\newtheorem{prop}{Proposition}[section]

\newcommand{\void}[1]{}
\newcommand\vep{{\varepsilon}}

\newcommand{\dif}{\mathrm{d}}
\newcommand{\me}{\mathrm{e}}

\newcommand{ \D }{ \displaystyle }

\setlength\baselineskip{20pt}

\begin{document}

\title[Classification of certain qualitative properties]{Classification of certain qualitative properties of solutions for the quasilinear parabolic equations}

\author{Yan Li, Zhengce Zhang}
\date{\today}
\address{School of Mathematics and Statistics, Xi'an Jiaotong University,
Xi'an, 710049, P. R. China}
\email{zhangzc@mail.xjtu.edu.cn, liyan1989@stu.xjtu.edu.cn}
\author{Liping Zhu}
\address{College of Science, Xi'an University of Architecture \& Technology,
Xi'an, 710055, P. R. China}
\email{78184385@qq.com}
\thanks{Corresponding author: Zhengce Zhang}
\thanks{Keywords: Qusilinear parabolic equation, Weak solution, Blowup, Extinction}
\thanks{2010 Mathematics Subject Classification: 35A01, 35B44, 35D30, 35K92}

\begin{abstract}
In this paper, we mainly consider the initial boundary problem for a
quasilinear parabolic equation
\[
u_t-\mathrm{div}\left(|\nabla u|^{p-2}\nabla
u\right)=-|u|^{\beta-1}u+\alpha|u|^{q-2}u,
\]
where $p>1,\beta>0$, $q\geq1$ and $\alpha>0$. By using Gagliardo-Nirenberg type inequality, energy method and comparison principle, the phenomena of blowup and extinction are
classified completely in the different ranges of reaction exponents.
\end{abstract}
\maketitle
\section{Introduction}

In this paper, the following initial boundary problem is considered:
\begin{equation}\label{IBP}
\left\{\begin{aligned}
u_t-\Delta_p u&=-|u|^{\beta-1}u+{\alpha|u|^{q-2}u},&x\in\Omega,t>0,\\
u&=0,&x\in\partial\Omega,t>0,\\
u(x,0)&=u_0(x),&x\in\Omega,\\
\end{aligned}\right.
\end{equation}
where $\Omega\subset\mathbb{R}^N\ (N\geq1)$ is a smoothly bounded domain and
$p>1, \beta>0, q\geq1,\alpha>0$. The operator $\Delta_p$ is defined as follows:
\[
\Delta_p u:=\mathrm{div}\left(|\nabla u|^{p-2}\nabla u\right).
\]
We also suppose that $u_0(x)\geq 0, u_0(x)\not\equiv0, u_0(x)\in
W_0^{1,p}(\Omega)\cap L^\infty(\Omega)$.

Problem (\ref{IBP}) arises in the theory of nonstationary filtration of
non-Newtonian (or dilatant) fluids and combustion of solid fuels. The term
$-|u|^{\beta-1}u$, which is negative as we can prove later that $u\geq0$, is
called a singular absorption term for $\beta<0$ or a strong absorption one
for $0<\beta<1$ or a weak absorption one for $\beta>1$. {$\alpha|u|^{q-2}u$} is an inner
source term. It has been known for many years that the term $-|u|^{\beta-1}u$
with $\beta>0$ may lead to finite time extinction, i.e. \emph{there exists a
$T\in (0, +\infty)$ such that $u(x,t)$ is nontrivial for $t\in[0,T)$ and
$u(x,t)\equiv0$ for $t\in[T,+\infty)$ a.e. in $x\in\Omega$.} On the other
hand, $f(x,u)$ may lead to finite time blowup. However, if the two terms
appear simultaneously in the first equation of (\ref{IBP}), then the
solutions will exhibit complicated properties {which will be studied later}. To be specific, both blowup
and extinction can occur under some suitable conditions.

As the operator $\Delta_p$ is degenerate for $p>2$ and is singular for $1<p<2$, it's impossible to consider the classical solution of \eqref{IBP} generally. However, the concept of weak solution is enough for our study. For the local existence of weak solution of \eqref{IBP}, there are various methods can be applied such as approximation by regular solution \cite{AA,JNZ}, fixed point method \cite{YJ}, the method of extension of semigroup \cite{GV} and the developed Faedo-Galerkin method \cite{AS,ASS,GSS}.

As soon as the local existence is established, one may ask whether the weak
solution is global or not. Moreover, we are eager to know when the solution
is global in time and when it blows up in finite time. For the global
solution, we also want to know whether it will become zero in finite time or
not.

The phenomenon of finite time blowup was first considered by H. Fujita \cite{HF} in
1966. Since then, many people devoted themselves to this problem. The main
equation they studied is the heat equation of the form $u_t-\Delta u=|u|^{p-1}u$
in bounded or unbounded smooth domain in $\mathbb{R}^N$. The theory of
blowup for heat equation is already developed, we refer the reader to
\cite{HM,LP,PQ,QS,SW} and the references therein. While for the $p$-Laplacian
equations {of the form $u_t-\Delta_pu=f(x,t,u,\nabla u)$}, there are still many problems worth studying, such as the blowup
rate, {the blowup time estimate,} the asymptotic behavior of blowup solutions, the blowup criteria and so
on. Some related results can be found in \cite{GP,GSS,LX,QBZ,YJ,ZL,ZL1,JNZ,JZDY,ZLR}
and the references therein. To be specific, in \cite{GSS,LX,QBZ,YJ,JNZ}, criteria for the
finite time blow-up to occur were established {in bounded domain} for different kinds of source terms and
values of $p$. {Generally speaking, finite time blowup may occur if $f(x,t,s,\vec{r})$ grows faster than $s^{p-1}$ ($p>2$) or $s$ ($1<p<2$) ($q=p-1$ or $q=1$ is called the critical blowup exponent) when $s\rightarrow\infty$ and the initial data is large enough. } In \cite{GP}, Galaktionov and Posashkov studied the blowup
set for the equation $u_t-\hbox{div}\left(|\nabla u|^\sigma\nabla
u\right)=u^\beta$
with $\sigma>0,\beta>1$ and $x\in\mathbb{R}^N$. {They proved that the radial solution will blow up at $|x|=0$. For the blowup time estimate, Zhou and Yang \cite{JZDY} considered the equation $u_t-\mathrm{div}\left(|\nabla u|^{m-2}\nabla u\right)=|u|^{p(x)-1}u$ with Dirichlet boundary condition on bounded domains. They obtained a upper bound of the blowup time for some suitable conditions on $m,p(x)$ and initial data.} and  {Zhao and Liang \cite{ZLR} considered a Cauchy problem $u_t-\Delta_p u=u^q$ in the radial situation and obtained the blowup rate upper bound is of the order $(T-t)^{-1/(q-1)}$ for $q>p-1$.} In our latest
papers \cite{ZL,ZL1}, we considered the equation
$u_t-\Delta_pu=\lambda u^m+\mu|\nabla u|^q$
with $p>2$ and $\lambda\mu<0$, and proved that $u$ will blow up in finite
time in the $L^\infty$-norm sense if $\lambda>0,\mu<0$ and
$m>\max\{p-1,q\},q\leq p/2$. {For the blowup of more general p-Laplacian equations, there are also some important results. In \cite{WZW,YYZ}, the Fujita exponent for equations with weighted source of the form
\begin{equation*}
\frac{\partial u}{\partial t}=\mathrm{div}\left(|\nabla u|^{p-2}\nabla u\right)+k\frac{1}{|x|^2}|\nabla u|^{m-1}\nabla u\cdot x+|x|^\lambda u^p
\end{equation*}
  were studied. In \cite{CLM1,JZ},  the global existence, blowup  and the blowup point of solutions for the doubly degenerate equations, i.e. equations with $\mathrm{div}\left(|\nabla u^m|^{p-2}\nabla u^m\right)$ were carefully studied.}

Finite time extinction is another important property of solutions of
evolution equations. Since Kalashnikov first brought in the concept of
extinction in 1974, it has attracted many mathematicians' interests and most
of them focused on the fast diffusive equations, see
\cite{Diaz,ED,FWL,FL,FX,YGG,KYC,MW1,YJ} for examples.
Moreover, in \cite{JLV}, the homogeneous $p$-Laplacian equation $u_t=\Delta_pu$ with $p>1,x\in\mathbb{R}^N$ was studied. It was shown that extinction can happen if and only if $1<p\leq p_c=2N/(N+1)$. In \cite{YJ}, Yin and Jin
considered the equation $u_t-\Delta_pu=\lambda u^q$ with $x\in\Omega$ and $1<p<2$. They proved that $q=p-1$ is
the critical extinction exponent. In \cite{YGG}, Gu considered the
$p$-Laplacian equation { $u_t-\Delta_pu=-|u|^{\beta-1}u$ }with $p>1$. In that paper, the
conditions for extinction to occur were obtained for any $p>1$ while the
non-extinction condition was obtained only for $p\geq2$. For the equation
with absorption and source terms, i.e.
$u_t-\Delta_pu=\lambda u^q-\beta u^k$
with $1<p<2$ and $0<q, k<1$, it was showed in \cite{FWL} that the solution
will exhibit extinction phenomenon under the assumptions that $u_0(x)$ or
$\lambda$ is small enough and that $\beta$ is large enough. {In \cite{JYZ,QBZ}, the extinction phenomenon for p-Laplacian with Neumann boundary data and nonlocal absorption term were studied.}

In this paper, we will deal with problem (\ref{IBP}) for any
$p>1$. In Section 2, we will give some basic concepts and a weak comparison
principle. Section 3 is devoted to the existence of the weak solution for
problem (\ref{IBP}) in a general case. The extinction phenomenon will be
discussed in Section 4. At last, we will give some blowup results under
different conditions for $u_0(x)$ and $p, \beta, q, \alpha$.

\section{Preliminaries}\setcounter{equation}{0}
Before giving the definition of weak solution, we bring in the following
function space:
\begin{equation}
\mathbb{V}:=\left\{v\in
L^p\left(0,T;W_0^{1,p}(\Omega)\right)\Big|\partial_tv\in
L^{p'}\left(0,T;W^{-1,p'}(\Omega)\right)\right\}.
\end{equation}

Now, let us introduce the definition of weak solution of (\ref{IBP}).
\begin{defn}\label{WS}
Let $Q_T=\Omega\times(0,T), S_T=\partial\Omega\times(0,T),\partial
Q_T=S_T\cup\left\{\overline{\Omega}\times\{0\}\right\}$. A function $u\in
\mathbb{V}\cap C\left(0,T;L^2(\Omega)\right)$ is called a weak solution of
(\ref{IBP}) if it satisfies:

1. for every nonnegative test-function $\varphi\in \mathbb{V}\cap
C\left(0,T;L^2(\Omega)\right)$,
\begin{equation}\label{ws}
\iint_{Q_T}\left(\partial_tu\varphi+|\nabla u|^{p-2}\nabla
u\cdot\nabla\varphi\right)\ \dif x\dif t=-\iint_{Q_T}
\left(|u|^{\beta-1}u-{\alpha|u|^{q-2}u}\right)\varphi\ \dif x\dif t.
\end{equation}

2. $u(x,0)=u_0(x)$ for a.e. $x\in\Omega$.

Moreover, if we replace ``$=$'' in (\ref{ws}) by``$\leq$''(``$\geq$'') and
assume that $u(x,0)\leq(\geq) u_0(x), u(x,t)|_{x\in\partial\Omega}\leq(\geq)0$,
then the corresponding solution is called a sub-(sup-) solution.
\end{defn}

For the weak solution of (\ref{IBP}), we have the following weak comparison
principle. Some similar results can be found in \cite{AA,LX,YJ,ZL}.

\begin{prop}\label{CP}
Suppose that $u,v$ are weak sub- and sup- solutions of (\ref{IBP})
respectively. If $u$ and $v$ are locally bounded, then $u\leq v$ a.e. in
$Q_T$.
\end{prop}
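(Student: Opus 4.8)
The plan is to prove the weak comparison principle by the standard Steklov-averaging and energy argument adapted to the $p$-Laplacian. First I would subtract the integral inequalities satisfied by the sub-solution $u$ and the sup-solution $v$, obtaining for every nonnegative admissible test function $\varphi$ that
\[
\iint_{Q_T}\partial_t(u-v)\varphi\,\dif x\dif t+\iint_{Q_T}\bigl(|\nabla u|^{p-2}\nabla u-|\nabla v|^{p-2}\nabla v\bigr)\cdot\nabla\varphi\,\dif x\dif t\le-\iint_{Q_T}\bigl(|u|^{\beta-1}u-|v|^{\beta-1}v-\alpha(|u|^{q-2}u-|v|^{q-2}v)\bigr)\varphi\,\dif x\dif t.
\]
Since $\partial_t(u-v)$ only lies in the dual space, the natural choice $\varphi=(u-v)_+$ is not directly legitimate; I would therefore first work with a Steklov average $(u-v)_h$, use $\varphi_h=\bigl((u-v)_h\bigr)_+$ (truncated in time so that it vanishes near $t=T$ and using $u(\cdot,0)\le v(\cdot,0)$ at $t=0$), and pass to the limit $h\to0$. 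This converts the parabolic term into $\tfrac12\sup_{t}\|(u-v)_+(t)\|_{L^2(\Omega)}^2$ after integration in time on $(0,\tau)$.

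The second step is to handle the three remaining terms with sign/monotonicity. The key algebraic fact is that $\xi\mapsto|\xi|^{p-2}\xi$ is monotone on $\mathbb{R}^N$, so $\bigl(|\nabla u|^{p-2}\nabla u-|\nabla v|^{p-2}\nabla v\bigr)\cdot\nabla(u-v)_+\ge0$ pointwise (it equals this quantity on $\{u>v\}$ and vanishes elsewhere); hence the diffusion term has a favorable sign and may be dropped. For the absorption term, $s\mapsto|s|^{\beta-1}s$ is nondecreasing, so $\bigl(|u|^{\beta-1}u-|v|^{\beta-1}v\bigr)(u-v)_+\ge0$, which again has the right sign (it appears with a minus sign on the right, so it too can be discarded). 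The only term without an automatic sign is the source term $\alpha(|u|^{q-2}u-|v|^{q-2}v)(u-v)_+$; here I would use the local boundedness hypothesis: on the set $\{u>v\}$ both $u$ and $v$ lie in a fixed bounded interval $[-M,M]$, and $s\mapsto|s|^{q-2}s$ is locally Lipschitz there (for $q\ge1$ — for $1\le q<2$ it is Lipschitz away from $0$ but one checks $(|u|^{q-2}u-|v|^{q-2}v)(u-v)_+\le C_M(u-v)_+^2$ still holds, e.g. via the elementary inequality for the map $s\mapsto|s|^{q-2}s$ on bounded sets), giving the bound $\alpha(|u|^{q-2}u-|v|^{q-2}v)(u-v)_+\le \alpha C_M\,(u-v)_+^2$.

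Combining these, for a.e. $\tau\in(0,T)$,
\[
\tfrac12\int_\Omega (u-v)_+^2(x,\tau)\,\dif x\le \alpha C_M\int_0^\tau\!\!\int_\Omega (u-v)_+^2\,\dif x\dif t,
\]
and since $(u-v)_+(x,0)=0$, Gronwall's inequality forces $\int_\Omega(u-v)_+^2(x,\tau)\,\dif x=0$ for all $\tau\in(0,T)$, i.e. $u\le v$ a.e. in $Q_T$.

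The main obstacle, and the part deserving the most care, is the rigorous justification of using $(u-v)_+$ (or its Steklov average) as a test function: one must confirm that $(u-v)_+$ actually lies in the admissible class $\mathbb{V}\cap C(0,T;L^2(\Omega))$ — membership in $L^p(0,T;W_0^{1,p}(\Omega))$ follows since truncation preserves $W_0^{1,p}$, but the time-regularity of the product and the validity of the integration-by-parts formula $\iint\partial_t(u-v)\,(u-v)_+=\tfrac12\int(u-v)_+^2\big|_0^\tau$ for functions whose time-derivative is only in $L^{p'}(0,T;W^{-1,p'})$ requires the Steklov-averaging lemma (or the Lions–Magenes-type result on $\tfrac{d}{dt}\|w\|_{L^2}^2$). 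A secondary technical point is the behavior of $|s|^{q-2}s$ near $s=0$ when $1\le q<2$; this is why the local boundedness hypothesis is invoked, and I would make the elementary monotonicity/Lipschitz-on-bounded-sets estimate explicit there rather than gloss over it.
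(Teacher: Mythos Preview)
Your proposal follows essentially the same route as the paper: test with $\varphi=(u-v)_+$, drop the diffusion term $\mathcal{M}$ and the absorption term $\mathcal{A}$ by monotonicity of $\xi\mapsto|\xi|^{p-2}\xi$ and $s\mapsto|s|^{\beta-1}s$, bound the source term by $L\iint\varphi^2$ using local boundedness, and conclude via Gronwall. Your added Steklov-averaging step to legitimize the test function and the integration-by-parts in time is a genuine improvement in rigor over the paper, which simply inserts $\varphi=(u-v)_+$ without comment.

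One caveat worth flagging (which the paper glosses over as well): your parenthetical claim that for $1\le q<2$ one still has $(|u|^{q-2}u-|v|^{q-2}v)(u-v)_+\le C_M(u-v)_+^2$ on bounded sets is not correct as stated. Taking $v=0$ and $u>0$ small gives left side $u^{q}$ and right side $C_M u^{2}$, and $u^{q}\gg u^{2}$ as $u\to0^+$ when $q<2$. The map $s\mapsto|s|^{q-2}s$ is only H\"older of exponent $q-1$ near the origin, so the best pointwise bound is $(|u|^{q-2}u-|v|^{q-2}v)(u-v)_+\le C(u-v)_+^{q}$, which after H\"older yields $y(\tau)\le C\int_0^\tau y(s)^{q/2}\,ds$; for $q<2$ this differential inequality does \emph{not} force $y\equiv0$ from $y(0)=0$. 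The paper's one-line passage to the constant $L$ has exactly the same gap. In the paper's actual applications the supersolutions used (e.g.\ $\Phi=K\phi$, $V=Le^{\sigma r}$, $w=C_1(t+C_2)^{-\gamma}$) are bounded below by a positive constant on $\overline\Omega$, and on $[\delta,M]$ the map $s\mapsto s^{q-1}$ \emph{is} Lipschitz, so the argument goes through there; but if you want the proposition in the generality stated you should either restrict to $q\ge2$ or add a hypothesis such as $v\ge\delta>0$ (or $u\le-\delta<0$) and make the dependence of $L$ on $\delta$ explicit.
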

\begin{proof} Let $\varphi=\max\{u-v,0\}$, then
$\varphi(x,0)=0,\varphi(x,t)|_{x\in\partial\Omega}=0$. By Definition
\ref{WS}, $\varphi(x,t)$ satisfies:
\begin{equation}\label{cp}
\begin{split}
&\iint_{Q_T} \partial_t\varphi\varphi\ \dif x\dif t+\underbrace{\iint_{Q_T}
\left(|\nabla u|^{p-2}\nabla u-|\nabla v|^{p-2}\nabla v\right)(\nabla
u-\nabla v)\ \dif x\dif t}_{\mathcal{M}}\\
\leq&-\iint_{Q_T}\left(|u|^{\beta-1}u-|v|^{\beta-1}v\right)\varphi\ \dif
x\dif t+\alpha\iint_{Q_T} ({|u|^{q-2}u-|v|^{q-2}v})\varphi\ \dif x\dif t\\
\leq&-\underbrace{\iint_{Q_T}\left(|u|^{\beta-1}u-|v|^{\beta-1}v\right)\varphi\
\dif x\dif t}_{\mathcal{A}}+L\iint_{Q_T} \varphi^2\ \dif x\dif t
\end{split}
\end{equation}
where $L$ is a constant depending on the sup-norms of $u$ and $v$.

Let us now estimate terms $\mathcal{M}$ and $\mathcal{A}$ appearing in
(\ref{cp}). By the monotone inequality (see \cite{PL}), we have
$\mathcal{M}\geq0$ for any $p>1$. For term $\mathcal{A}$, by the fact that
\begin{equation}\label{termA}\left\{
\begin{array}{l}
|u|^{\beta-1}u-|v|^{\beta-1}v=u^\beta-v^\beta>0,\ \mbox{if}\ u>v>0,\\[2mm]
|u|^{\beta-1}u-|v|^{\beta-1}v=u^\beta+|v|^\beta>0,\ \mbox{if}\ u>0>v,\\[2mm]
|u|^{\beta-1}u-|v|^{\beta-1}v=-|u|^\beta+|v|^\beta>0,\ \mbox{if}\ 0>u>v,\\[2mm]
\end{array}\right.
\end{equation}
we have $\mathcal{A}\geq0$.

Following the discussion above, we have
\begin{equation}
\frac{1}{2}\int_\Omega \varphi^2\ \dif x\leq L\iint_{Q_T} \varphi^2\ \dif x\dif t.
\end{equation}
By Gronwall's inequality, we have $\int_\Omega \varphi^2\ \dif x=0$. This
implies that $\varphi=0$ a.e. $x\in\Omega$, i.e. $u\leq v$ a.e. $(x,t)\in
Q_{T}$.
\end{proof}
\section{Existence of weak solution}\setcounter{equation}{0}
In this section, we will establish the local existence and global existence
of weak solutions of (\ref{IBP}). Analogous to the proofs in \cite{AS,ASS,GSS} and the compactness results in \cite{JS}, we have the following local existence of bounded weak solution for \eqref{IBP}.
\begin{thm}\label{HOR}
Suppose that $u_0\in W_0^{1,p}(\Omega)\cap L^\infty(\Omega),u_0\geq0,u_0\not\equiv0$ a.e. in
$\Omega$ and that $q\geq1$. Then there exists a $T^*=T^*(u_0)>0$ such that for $0<T<T^*$ \eqref{IBP} admits a solution
\begin{equation}
u\in \mathbb{U}:=\left\{u\in L^\infty\left(0,T;W_0^{1,p}(\Omega)\right)\cap
L^\infty(Q_T)\ \Big|\ \partial_t u\in L^2(Q_T)\right\}.
\end{equation}
Moreover, $0\leq u\leq M$ a.e. in $Q_T$ for some $M$ depending on $u_0(x)$.
\end{thm}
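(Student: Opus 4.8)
The plan is to construct the solution by a double approximation procedure: first regularize the degenerate/singular operator $\Delta_p$ and the (possibly) non-Lipschitz absorption nonlinearity, then pass to the limit using energy estimates and compactness. Concretely, for $\varepsilon>0$ I would replace $|\nabla u|^{p-2}\nabla u$ by $(|\nabla u|^2+\varepsilon)^{(p-2)/2}\nabla u$ and smooth the reaction term $f(u)=-|u|^{\beta-1}u+\alpha|u|^{q-2}u$ into a globally Lipschitz $f_\varepsilon$, also mollifying $u_0$ into smooth $u_{0\varepsilon}$ with the same bounds. The regularized problem is a nondegenerate quasilinear parabolic equation with smooth data, so classical parabolic theory (or the Faedo--Galerkin scheme as in \cite{AS,ASS,GSS}) yields a smooth solution $u_\varepsilon$ on a time interval whose length I must show is bounded below independently of $\varepsilon$.

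The second step is the a priori estimates that make the limit possible and simultaneously give the stated regularity and the $L^\infty$ bound. For the sup-bound $0\le u_\varepsilon\le M$: nonnegativity follows by testing the equation for $u_\varepsilon$ with $-(u_\varepsilon)_-$ (using $f_\varepsilon(0)=0$ and the sign structure), and the upper bound comes from comparison with the spatially homogeneous ODE $M'(t)=\alpha M^{q-1}$, $M(0)=\|u_0\|_\infty$ (dropping the good absorption term), which lives on a time interval $[0,T^*)$ with $T^*$ depending only on $\|u_0\|_\infty$, $\alpha$, $q$; alternatively a Stampacchia/Moser truncation argument works. This is exactly what fixes $T^*=T^*(u_0)$ and $M$. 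For the energy estimates: testing with $\partial_t u_\varepsilon$ gives, after integrating by parts,
\begin{equation}
\iint_{Q_T}|\partial_t u_\varepsilon|^2\,\dif x\dif t+\frac{1}{p}\int_\Omega(|\nabla u_\varepsilon|^2+\varepsilon)^{p/2}\,\dif x\Big|_0^T=\iint_{Q_T}f_\varepsilon(u_\varepsilon)\partial_t u_\varepsilon\,\dif x\dif t,
\end{equation}
and since $u_\varepsilon$ is already uniformly bounded in $L^\infty$, the right side is controlled by $\tfrac12\iint|\partial_t u_\varepsilon|^2+C(M)$, so we get $u_\varepsilon$ bounded in $L^\infty(0,T;W_0^{1,p})$ and $\partial_t u_\varepsilon$ bounded in $L^2(Q_T)$, uniformly in $\varepsilon$. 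These are precisely the bounds defining the space $\mathbb{U}$.

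The third step is the passage to the limit $\varepsilon\to0$. From the uniform bounds and Aubin--Lions--Simon compactness (citing \cite{JS}), extract a subsequence with $u_\varepsilon\to u$ strongly in $L^p(Q_T)$ and a.e., $\nabla u_\varepsilon\rightharpoonup \nabla u$ weakly-$*$ in $L^\infty(0,T;L^p)$, $\partial_t u_\varepsilon\rightharpoonup \partial_t u$ weakly in $L^2(Q_T)$. A.e. convergence plus the uniform $L^\infty$ bound handles the reaction term by dominated convergence and disposes of the mollification $f_\varepsilon\to f$. The only delicate point — and the main obstacle — is identifying the weak limit of the nonlinear flux $(|\nabla u_\varepsilon|^2+\varepsilon)^{(p-2)/2}\nabla u_\varepsilon$ as $|\nabla u|^{p-2}\nabla u$; this is the standard monotone-operator difficulty. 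I would resolve it by Minty's trick: the monotone inequality (the same one invoked for term $\mathcal{M}$ in Proposition \ref{CP}) together with the energy identity gives $\limsup_\varepsilon\iint |\nabla u_\varepsilon|^p\le\iint(\text{flux})\cdot\nabla u$, and then testing the monotonicity of the regularized operator against $\nabla u_\varepsilon-\nabla w$ for arbitrary $w$ and letting $\varepsilon\to0$ identifies the limit flux. Once this is done, $u\in\mathbb{U}$ satisfies \eqref{ws}, the continuity $u\in C(0,T;L^2)$ follows from $u\in L^\infty(0,T;W_0^{1,p})$ with $\partial_t u\in L^2(Q_T)$, and $u(\cdot,0)=u_0$ holds by the strong convergence; the bounds $0\le u\le M$ pass to the limit from the a.e. convergence. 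I would simply remark that the details parallel \cite{AS,ASS,GSS,JS} and not reproduce them in full.
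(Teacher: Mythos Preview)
Your proposal is correct and, in fact, more detailed than what the paper offers: the paper gives no proof of this theorem at all, merely prefacing the statement with the sentence ``Analogous to the proofs in \cite{AS,ASS,GSS} and the compactness results in \cite{JS}, we have the following local existence\ldots'' and then moving on. So there is essentially nothing to compare against beyond the choice of cited machinery.

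That said, there is a mild methodological difference worth noting. The references \cite{AS,ASS,GSS} that the paper invokes build the solution directly by a Faedo--Galerkin scheme (finite-dimensional approximations in a spectral basis), whereas your primary route is regularization of the operator and the nonlinearity followed by classical parabolic theory---closer in spirit to \cite{AA,JNZ}, which the paper lists under ``approximation by regular solution.'' You do mention Faedo--Galerkin as an alternative, and the a~priori estimates and the limit passage (Aubin--Lions--Simon compactness from \cite{JS}, Minty's trick for the flux) are identical in either framework, so the endpoints coincide. The regularization route has the minor advantage that the comparison with the ODE $M'=\alpha M^{q-1}$ and the test against $-(u_\varepsilon)_-$ are completely classical at the approximate level; the Galerkin route avoids introducing $\varepsilon$ but requires a bit more care to justify the $L^\infty$ bound on finite-dimensional truncations. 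Either way your sketch is sound and your closing remark---that the details parallel \cite{AS,ASS,GSS,JS}---is exactly what the paper itself does.
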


Next, we will give some results focusing on the global existence of the weak
solution for (\ref{IBP}).

Denote by $\Lambda_1>0$ the first eigenvalue of the $p$-Laplacian operator
with homogeneous Dirichlet boundary condition, i.e.
\begin{equation}
\Lambda_1:=\inf\left\{\int_\Omega |\nabla u|^p\ \dif x\ \Big|\ u\in
W_0^{1,p}(\Omega),\int_\Omega |u|^p\ \dif x=1\right\}.
\end{equation}

\begin{thm}[Global existence]\label{GE}
Let $u_0(x)\in W^{1,p}_0(\Omega)\cap L^\infty(\Omega),u_0(x)\geq0$ and one of the following
conditions is satisfied

$(\mathrm{i})\ q=p,\alpha<\Lambda_1$.

$(\mathrm{ii})\ q=p=\beta+1,\alpha<\Lambda_1+1$.

$(\mathrm{iii})\ 2<p\leq q<\beta+1$.

{$(\mathrm{iv})\ q<p$.}

Then the solution of (\ref{IBP}) is globally in time bounded, i.e. there
exists a constant $M$ depends only on $p,q,\beta,\Lambda_1,\alpha,u_0,\Omega$
such that for every $T>0,0\leq u\leq M$.
\end{thm}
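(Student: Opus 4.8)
\emph{The plan.} Since bounded weak solutions of \eqref{IBP} are unique (apply Proposition \ref{CP} to two solutions, each of which is simultaneously a sub- and a sup-solution), Theorem \ref{HOR} yields a maximal bounded weak solution on a maximal interval $[0,T^*)$, and on it $u\ge 0$ (the constant $0$ is a sub-solution, so Proposition \ref{CP} gives $u\ge 0$). By the usual continuation argument it suffices to produce an a priori bound $\|u(t)\|_{L^\infty(\Omega)}\le M$ valid on $[0,T^*)$ with $M$ depending only on $p,q,\beta,\Lambda_1,\alpha,u_0,\Omega$; this forces $T^*=\infty$ and is exactly the conclusion. I would obtain such a bound in two steps: a time-uniform bound on $\int_\Omega|\nabla u(t)|^p\,\dif x$ from a Lyapunov functional, followed by an upgrade to an $L^\infty$ bound through a Moser-type iteration built on a Gagliardo--Nirenberg inequality. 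When $q<\beta+1$ (hence in $(\mathrm{iii})$ and in the part of $(\mathrm{iv})$ with $q<\beta+1$) there is a shortcut: the constant $M=\max\{\|u_0\|_{L^\infty(\Omega)},\alpha^{1/(\beta+1-q)}\}$ satisfies $0\ge-M^{\beta}+\alpha M^{q-1}$ and $M\ge u_0$, hence is a bounded sup-solution, and Proposition \ref{CP} gives $u\le M$ at once.

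\emph{Step 1: the energy bound.} Testing with $\partial_t u$, which is legitimate since $\partial_t u\in L^2(Q_T)$ by Theorem \ref{HOR} (up to a routine regularization), produces the dissipation identity
\[
\frac{\dif}{\dif t}J(u(t))=-\int_\Omega|\partial_t u|^2\,\dif x\le 0,\qquad J(v):=\frac1p\int_\Omega|\nabla v|^p\,\dif x+\frac1{\beta+1}\int_\Omega v^{\beta+1}\,\dif x-\frac\alpha q\int_\Omega v^{q}\,\dif x ,
\]
so that $J(u(t))\le J(u_0)$ for every $t$. The point is then to show, case by case, that $J(v)\ge c\int_\Omega|\nabla v|^p\,\dif x-C$ for admissible $v$. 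In $(\mathrm{i})$, $q=p$ and the definition of $\Lambda_1$ give $\alpha\int v^p\le(\alpha/\Lambda_1)\int|\nabla v|^p$, so $J(v)\ge\tfrac1p(1-\alpha/\Lambda_1)\int|\nabla v|^p$ with $1-\alpha/\Lambda_1>0$. In $(\mathrm{ii})$, $q=p=\beta+1$ gives $J(v)=\tfrac1p\int|\nabla v|^p+\tfrac{1-\alpha}p\int v^p$, and bounding $\int v^p$ by $\Lambda_1^{-1}\int|\nabla v|^p$ (keeping track of the sign of $1-\alpha$) yields $J(v)\ge\tfrac{\Lambda_1+1-\alpha}{p\Lambda_1}\int|\nabla v|^p$ with $\Lambda_1+1-\alpha>0$. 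In $(\mathrm{iii})$, $q<\beta+1$ lets Young's inequality give $\tfrac\alpha q v^q\le\tfrac1{2(\beta+1)}v^{\beta+1}+C$, hence $J(v)\ge\tfrac1p\int|\nabla v|^p-C|\Omega|$. In $(\mathrm{iv})$, $q<p$ forces the Gagliardo--Nirenberg inequality to bound $\|v\|_{L^q}$ by a power of $\|\nabla v\|_{L^p}$ times a power of $\|v\|_{L^{\beta+1}}$ with the exponent on $\|\nabla v\|_{L^p}$ strictly less than $p$, so Young's inequality absorbs $\tfrac\alpha q\int v^q$ into $\tfrac1{2p}\int|\nabla v|^p+\tfrac1{2(\beta+1)}\int v^{\beta+1}$ up to an additive constant, again giving $J(v)\ge c\int|\nabla v|^p-C$. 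In all four cases I conclude $\sup_t\int_\Omega|\nabla u(t)|^p\,\dif x\le C$, hence, by Sobolev's embedding $W^{1,p}_0(\Omega)\hookrightarrow L^{p^\ast}(\Omega)$, also $\sup_t\|u(t)\|_{L^{p^\ast}(\Omega)}\le C$; when $p\ge N$ this already gives $\sup_t\|u(t)\|_{L^\infty(\Omega)}\le C$ by Morrey's inequality, settling that range.

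\emph{Step 2: the $L^\infty$ bound for $p<N$.} Here I would test \eqref{IBP} with $u^{k-1}$, $k\ge 2$; since $\int|\nabla u|^{p-2}\nabla u\cdot\nabla(u^{k-1})=(k-1)(p/(k+p-2))^p\int|\nabla u^{(k+p-2)/p}|^p$ and $-\int u^{k-1+\beta}\le0$, this gives
\[
\frac1k\frac{\dif}{\dif t}\int_\Omega u^{k}\,\dif x+c_k\int_\Omega\bigl|\nabla u^{(k+p-2)/p}\bigr|^p\,\dif x\le\alpha\int_\Omega u^{k-1+q}\,\dif x .
\]
Rewriting the right-hand side in terms of $w:=u^{(k+p-2)/p}$, invoking $W^{1,p}_0\hookrightarrow L^{p^\ast}$ and a Gagliardo--Nirenberg interpolation against an $L^s$-norm already controlled by Step 1, one checks that for $k$ large the exponent $k-1+q$ is subcritical and the dissipative term dominates; a standard iteration over $k_j\to\infty$ then produces $\sup_{t\ge0}\|u(t)\|_{L^\infty(\Omega)}\le M$ with $M$ depending only on the data. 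Combined with $u\ge0$, this completes the argument.

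\emph{The main obstacle.} The delicate part is Step 2: the iteration must be arranged so that the constants do not degenerate as $t\to\infty$ (for instance by iterating on unit time windows, using that Step 1 makes $J$ bounded below, hence $\int_t^{t+1}\|\partial_t u\|_{L^2}^2$ uniformly bounded), and so that the source contribution $\alpha\int u^{k-1+q}$ is genuinely reabsorbed at every stage --- which is precisely where the hypotheses enter, through subcriticality of the source relative to $\Delta_p$ in $(\mathrm{i})$, $(\mathrm{ii})$, $(\mathrm{iv})$ and its domination by the absorption term in $(\mathrm{iii})$, and where the Gagliardo--Nirenberg estimates must be set up with care. A minor technical nuisance is that the constant sup-solution of the $q<\beta+1$ shortcut does not lie in $W^{1,p}_0(\Omega)$, so Proposition \ref{CP} should there be read for bounded sup-solutions in the relaxed sense of Definition \ref{WS} (or one truncates the constant near $\partial\Omega$ and passes to the limit).
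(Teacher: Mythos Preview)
Your outline is reasonable, but the paper proceeds by an entirely different and more elementary route: in every case it constructs an explicit time-independent super-solution and concludes by Proposition~\ref{CP}. For $(\mathrm{i})$ and $(\mathrm{ii})$ the paper enlarges $\Omega$ slightly to $\widetilde\Omega$, uses continuity of the first eigenvalue to get $\alpha\le\Lambda_1(\widetilde\Omega)\le\Lambda_1(\Omega)$, and takes $\Phi=K\phi$ where $\phi$ is the first $p$-Laplace eigenfunction on $\widetilde\Omega$; then $-\Delta_p\Phi=\Lambda_1(\widetilde\Omega)\Phi^{p-1}\ge\alpha\Phi^{p-1}$ gives the super-solution directly. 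For $(\mathrm{iii})$ the paper builds a barrier $V(x)=L\me^{\sigma|x-x_a|}$ and chooses $\sigma,L$ so that the absorption term $V^\beta$ dominates both the diffusion and the source; this is exactly the spirit of your constant super-solution shortcut, but the exponential profile sidesteps the boundary-trace issue you flagged. For $(\mathrm{iv})$ the paper uses Young's inequality $\alpha s^{q-1}\le\gamma s^{p-1}+C(\gamma)$ and then recycles the eigenfunction barrier.

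What each approach buys: the paper's comparison argument is short, entirely elementary, and yields the bound in one stroke, but it is tailored to the specific nonlinearities and requires guessing the right barrier. Your energy--Moser strategy is more systematic and would apply to broader classes of sources, but Step~2 is genuinely delicate. In particular, the naive absorption you sketch fails in case $(\mathrm{i})$: testing with $u^{k-1}$ produces the coefficient $c_k=(k-1)\bigl(p/(k+p-2)\bigr)^p$ in front of $\int|\nabla w|^p$, and $c_k\to 0$ as $k\to\infty$, so the source $\alpha\int u^{k+p-2}=\alpha\int w^p\le(\alpha/\Lambda_1)\int|\nabla w|^p$ cannot be absorbed directly for large $k$. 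One must instead run a genuine recursive iteration with interpolation against the $L^{p^\ast}$ bound from Step~1 and track the growth of constants; this is standard but not a one-liner, and the time-uniformity you mention (unit windows, dissipation of $J$) must be handled with care. Your proposal is therefore a valid alternative strategy, but the paper's proof is both simpler and complete as written.
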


\begin{proof} Case $(\mathrm{i})$. Let
$\widetilde{\Omega}\subset\mathbb{R}^N$ be a smooth domain which satisfies:
$\Omega\subset\subset\widetilde{\Omega}$. Denote by $\phi$ and
$\Lambda_1(\widetilde{\Omega})$ the first eigenfunction and the first
eigenvalue related to the following Dirichlet problem:
\begin{equation}
-\Delta_p\phi=\Lambda_1(\widetilde{\Omega})|\phi|^{p-2}\phi\ \mbox{in}\
\widetilde{\Omega},\ \phi=0\ \mbox{on}\ \partial\widetilde{\Omega},\
\int_{\widetilde{\Omega}} |\phi|^p\ \dif x=1.
\end{equation}
Then by \cite[Lemma 1.1]{LX}, we know that $\phi>0$ in $\widetilde{\Omega}$
and that $\Lambda_1(\widetilde{\Omega})<\Lambda_1(\Omega)$. Moreover, by
\cite[Theorem 3.2]{LYI}, $\Lambda_1(\widetilde{\Omega})$ continuously depends
on $\widetilde{\Omega}$ and
$\Lambda_1(\widetilde{\Omega})\rightarrow\Lambda_1(\Omega)$ as
$\widetilde{\Omega}\rightarrow\Omega$ in the Hausdorff complementary
topology. Thus, we can
choose a suitable $\widetilde{\Omega}$ and $\theta>0$ such that
$\alpha\leq\Lambda_1(\widetilde{\Omega})\leq\Lambda_1(\Omega)$.
Let $\Phi=K\phi\geq K\mu\geq\|u_0\|_{L^\infty(\Omega)}$
with $\mu=\inf_\Omega\phi>0$. Then a simple
calculation shows that for every nonnegative test-function
$\varphi\in\mathbb{V}\cap (0, T; L^2 (\Omega))$
\begin{equation}
\begin{split}
\iint_{Q_T}
\partial_t\Phi\varphi+|\nabla\Phi|^{p-2}\nabla\Phi\cdot\nabla\varphi\ \dif
x\dif t
&\geq\Lambda_1(\widetilde{\Omega})\iint_{Q_T} \Phi^{p-1}\varphi\ \dif x\dif
t\\
&\geq\alpha\iint_{Q_T} \Phi^{p-1}\varphi\ \dif x\dif t
\end{split}
\end{equation}
This implies that $\Phi$ is a sup-solution of (\ref{IBP}). Then by
Proposition \ref{CP}, we have $0\leq u\leq\Phi$ a.e. in $Q_T$. We can also
see from the construction of $\Phi$ that it's independent of $t$ which
enables us to continue the procedure above on any time interval $[T,T']$.
Then, we can assert that the solution of (\ref{IBP}) is globally in time
bounded.

The proof of Case $(\mathrm{ii})$ is same as the one of Case $(\mathrm{i})$.

Case $(\mathrm{iii})$. { Without loss of generality, we assume $\alpha=1$}, the method below is still valid for the general case
with a little modification. Denote by $\rho(\Omega)$ the diameter of
$\Omega$, then we can easily know that $\rho(\Omega)<\infty$ as $\Omega$ is
bounded. Let $\vep\in(0,1)$ satisfies: there exists a ball of radius $\vep$
belonging to $B(\cdot,\rho(\Omega)+1)\cap\Omega^c$. For any $a\in\Omega$, let
$x_a$ satisfies:
\begin{equation}\label{xa}
B(x_a,\vep)\subset B(x_a,\rho(\Omega)+1)\cap\Omega^c, |x_a-a|<\rho(\Omega)+1.
\end{equation}
Let
\begin{equation}\label{DOV}
V(x,t)=L\me^{\sigma r},\ r=|x-x_a|,\ x\in\Omega.
\end{equation}
Define: $\mathcal{L}_p v:=v_t-\Delta_pv-v^{q-1}+v^\beta$, then $V(x,t)$
satisfies
\begin{equation}
\mathcal{L}_pV=-(p-1)(L\sigma)^{p-1}\me^{(p-1)\sigma
r}-\frac{N-1}{r}(L\sigma)^{p-1}\me^{(p-1)\sigma r}-L^{q-1}\me^{(q-1)\sigma
r}+L^\beta\me^{\beta\sigma r}.
\end{equation}
In order to derive that $\mathcal{L}_pV\geq 0$, we need to choose suitable $\sigma$ and $L$
such that
\begin{equation}\label{Lpv}
(p-1)\sigma^p+\frac{N-1}{r}\sigma^{p-1}\leq
L^{\beta+1-p}\me^{(\beta+1-p)\sigma r}-L^{q-p}\me^{(q-p)\sigma r}.
\end{equation}
By (\ref{xa}) and (\ref{DOV}), we know that $\vep\leq r<\rho(\Omega)+1$. Then
if we want (\ref{Lpv}) to be satisfied, it's sufficient that
\begin{equation}
(p-1)\sigma^p+\frac{N-1}{\vep}\sigma^{p-1}+L^{q-p}\me^{(q-p)\sigma(\rho(\Omega)+1)}\leq
L^{\beta+1-p}.
\end{equation}
If $q>p$, let $\sigma$ and $L$ satisfy
\begin{equation}
\sigma=\frac{1}{(q-p)(\rho(\Omega)+1)},
L=\max\left\{(2\me)^\frac{1}{\beta+1-q},\left(2\left((p-1)\sigma^p+\frac{N-1}{\vep}\sigma^{p-1}\right)\right)^\frac{1}{\beta+1-p}\right\}.
\end{equation}
While if $q=p$, let $\sigma$ and $L$ satisfy
\begin{equation}
\sigma=1,
L=\max\left\{2^\frac{1}{\beta+1-q},\left(2\left(p-1+\frac{N-1}{\vep}\right)\right)^\frac{1}{\beta+1-p}\right\}.
\end{equation}
Then there holds $\mathcal{L}_pV\geq0$. If we assume furthermore that
$L\geq\|u_0\|_{L^\infty(\Omega)}$,
then $V(x,0)\geq u_0(x)$. Thus, we have proved that $V(x,t)$ is a
super-solution of (\ref{IBP}). By Proposition \ref{CP}, we have
\begin{equation}\label{ub}
u(x,t)\leq L\me^{\sigma(\rho(\Omega)+1)}<\infty.
\end{equation}
Notice that the right hand side of (\ref{ub}) is in fact independent of
$t$, which enable us to continue the procedure above in any time interval
$[T,T']$. Hence, we can conclude that $u(x,t)$ is globally in time bounded.

{In the case $q<p$, by
Young's inequality, there exists a small $\gamma>0$ such that
$\alpha|s|^{q-2} s\leq\gamma|s|^{p-1}+C(\gamma)$. Then the
conclusion follows from the same procedure as above.}
\end{proof}

\section{Finite time extinction and Decay}\setcounter{equation}{0}
Before proving our main results, we first introduce the following
Gagliardo-Nirenberg type inequality which can be found in \cite{ED,GSS} and
the references therein.

\begin{lem}\label{GN}
Let $1<p<+\infty$ and $r\in[\beta+1,+\infty)$ if $p\geq N$, and
$r\in\left[\beta+1,\frac{Np}{N-p}\right]$ if $p<N$. Then there exists a
constant $C>0$, depending only on $p,r,N,\beta$ and $|\Omega|$, such that for
every $u\in W_0^{1,p}(\Omega)$
\begin{equation}\label{gn}
\|u\|_{L^r(\Omega)}\leq C\|\nabla
u\|^\theta_{L^p(\Omega)}\|u\|^{1-\theta}_{L^{\beta+1}(\Omega)}\ \mbox{with}\
\theta=\frac{\frac{1}{\beta+1}-\frac{1}{r}}{\frac{1}{N}-\frac{1}{p}+\frac{1}{\beta+1}}\in{[0,1]}.
\end{equation}
\end{lem}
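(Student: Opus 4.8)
The stated inequality is the Gagliardo--Nirenberg interpolation inequality applied to the three exponents $r$, $p$, $\beta+1$, and the plan is to assemble it from the Sobolev embedding theorem and Hölder's inequality. First I would dispose of the two endpoints. If $r=\beta+1$ then $\theta=0$ and \eqref{gn} is trivial (an equality, with $C=1$). If $p<N$ and $r=\frac{Np}{N-p}$, which is the Sobolev exponent $p^{*}$, then $\theta=1$ and \eqref{gn} is precisely the Sobolev inequality $\|u\|_{L^{p^{*}}(\Omega)}\le C(N,p)\|\nabla u\|_{L^{p}(\Omega)}$, obtained by extending $u\in W^{1,p}_{0}(\Omega)$ by zero to $\R^{N}$ and applying the Gagliardo--Nirenberg--Sobolev inequality there. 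Hence it remains to treat $\theta\in(0,1)$.

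For the subcritical range $p<N$, $\beta+1<r<p^{*}$, the point is that $r$ is an interpolation exponent between $\beta+1$ and $p^{*}$: since $\frac{1}{p^{*}}=\frac{1}{p}-\frac{1}{N}$, the number $\theta$ in the statement is exactly the one for which $\frac{1}{r}=\frac{1-\theta}{\beta+1}+\frac{\theta}{p^{*}}$. Writing $|u|^{r}=|u|^{(1-\theta)r}\,|u|^{\theta r}$ and applying Hölder's inequality with the conjugate exponents $\frac{\beta+1}{(1-\theta)r}$ and $\frac{p^{*}}{\theta r}$ (both $>1$ since $0<\theta<1$) gives $\|u\|_{L^{r}(\Omega)}\le\|u\|_{L^{\beta+1}(\Omega)}^{1-\theta}\|u\|_{L^{p^{*}}(\Omega)}^{\theta}$, and bounding the last factor by the Sobolev inequality produces \eqref{gn} with $C=C(N,p)^{\theta}$. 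When $p\ge N$ I would run the same scheme from the corresponding endpoint: for $p>N$ one starts from the Morrey--Sobolev embedding in homogeneous Gagliardo--Nirenberg form, $\|u\|_{L^{\infty}(\Omega)}\le C(N,p,|\Omega|)\|\nabla u\|_{L^{p}(\Omega)}^{\theta_{\infty}}\|u\|_{L^{\beta+1}(\Omega)}^{1-\theta_{\infty}}$ with $\theta_{\infty}$ the value of the formula at $r=\infty$, and interpolates by Hölder between $L^{\beta+1}(\Omega)$ and $L^{\infty}(\Omega)$; a short computation shows that the resulting gradient exponent collapses to the stated $\theta$. For the borderline $p=N$, where neither an $L^{p^{*}}$ nor an $L^{\infty}$ endpoint is available, I would invoke the general Gagliardo--Nirenberg interpolation theorem recorded in \cite{ED,GSS}, or the classical derivation that applies the $W^{1,1}$ Sobolev inequality to the powers $|u|^{\gamma}$ and iterates.

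The only genuinely delicate point is this borderline case $p=N$: there the result does not reduce to a single embedding, and one needs the full interpolation theorem (or the iteration argument), since a naive interpolation against $L^{s}(\Omega)$ for large $s$ introduces a spurious logarithmic factor in the Sobolev constant. Everything else is routine: checking that the exponent produced by each Hölder interpolation equals the $\theta$ of \eqref{gn}, and verifying that the final constant depends only on $p$, $r$, $N$, $\beta$ and---through the Poincaré/Morrey constants in the case $p\ge N$---on $|\Omega|$.
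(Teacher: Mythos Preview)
The paper does not supply a proof of this lemma: it is stated with a bare citation to \cite{ED,GSS} and then used. Your proposal, by contrast, outlines the standard derivation of the Gagliardo--Nirenberg inequality from the Sobolev embedding and H\"older interpolation, and the outline is correct. The case $p<N$ is exactly as you say; for $p>N$ your reduction to the $L^{\infty}$ endpoint is valid (the check that $\lambda\theta_{\infty}=\theta$ when one interpolates between $L^{\beta+1}$ and $L^{\infty}$ goes through), though note that the $L^{\infty}$ endpoint you invoke is itself a nontrivial instance of Gagliardo--Nirenberg rather than the bare Morrey bound $\|u\|_{L^{\infty}}\le C\|\nabla u\|_{L^{p}}$---the latter would give the wrong exponent. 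You are right that $p=N$ is the case where the elementary interpolation breaks down and one must appeal to the full theorem or the iteration on powers $|u|^{\gamma}$.

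In short, your argument is sound and supplies more than the paper does; the only thing to tighten in a final write-up would be an explicit reference (or proof) for the $L^{\infty}$ Gagliardo--Nirenberg endpoint in the case $p>N$, since as written that step is close to assuming what you want to prove at one particular value of $r$.
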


\begin{rem}\label{theta}
We can see from the expression of $\theta$ with $r>\beta+1$ that
\begin{equation}
\theta<\frac{\frac{1}{\beta+1}-\frac{1}{r}}{-\frac{1}{p}+\frac{1}{\beta+1}}
\end{equation}
and that
\begin{equation}
r\left(\frac{\theta}{p}+\frac{1-\theta}{\beta+1}\right)>1
\end{equation}
which will play an important role in establishing a desired ordinary
differential inequality later.
\end{rem}

\subsection{Finite time extinction}
The following theorem deals with the finite time extinction.
\begin{thm}\label{FTE}
Let $\beta+1\leq q\leq p$ and $\beta<\min\{1,p-1\}$. Assume additionally that
 $\alpha<\min\{1,\Lambda_1\}$. Then there
exists a finite time $T^*>0$, such that $u=0$ a.e. in $\Omega$ for $t\geq
T^*$.
\end{thm}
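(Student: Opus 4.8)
The plan is to run an energy argument on the quantity $E(t):=\frac12\int_\Omega u^2\,\dif x$ and show it satisfies a differential inequality of the form $E'(t)\le -C\,E(t)^{\mu}$ with $\mu<1$, which forces $E$ to vanish in finite time. First I would take $\varphi=u$ as a test function in the weak formulation \eqref{ws}; since Theorem \ref{HOR} guarantees $0\le u\le M$ with $\partial_t u\in L^2(Q_T)$, this is legitimate and yields, for a.e.\ $t$,
\begin{equation}
\frac{\dif}{\dif t}\Big(\tfrac12\int_\Omega u^2\,\dif x\Big)+\int_\Omega|\nabla u|^p\,\dif x
=-\int_\Omega u^{\beta+1}\,\dif x+\alpha\int_\Omega u^{q}\,\dif x.
\end{equation}
The hypothesis $q\le p$ combined with $\alpha<\Lambda_1$ lets me absorb the source term: either $q=p$, in which case $\alpha\int u^p\le (\alpha/\Lambda_1)\int|\nabla u|^p$ by the definition of $\Lambda_1$, leaving a positive multiple of $\int|\nabla u|^p$; or $q<p$, in which case (as $u$ is bounded) $\alpha u^q\le \delta u^p+C_\delta$ plus an absorption-term comparison, but the cleaner route is to use $q\ge\beta+1$ together with $\alpha<1$ so that $\alpha\int u^q$ is dominated by $\int u^{\beta+1}$ up to the $\int|\nabla u|^p$ slack. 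In either case I arrive at
\begin{equation}
\frac{\dif}{\dif t}\Big(\tfrac12\int_\Omega u^2\,\dif x\Big)+c_1\int_\Omega|\nabla u|^p\,\dif x+c_2\int_\Omega u^{\beta+1}\,\dif x\le 0
\end{equation}
for some $c_1,c_2>0$ (one of them possibly zero depending on the subcase, but not both).

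Next I would convert the dissipation into a lower bound in terms of $E$. Here the condition $\beta<\min\{1,p-1\}$ is essential. Apply Lemma \ref{GN} with $r=2$ (note $2\ge\beta+1$ since $\beta<1$, and when $p<N$ one checks $2\le Np/(N-p)$ is automatic for the relevant ranges, or one argues directly): this gives $\|u\|_{L^2}\le C\|\nabla u\|_{L^p}^{\theta}\|u\|_{L^{\beta+1}}^{1-\theta}$, hence
\begin{equation}
\int_\Omega u^2\,\dif x\le C'\Big(\int_\Omega|\nabla u|^p\,\dif x\Big)^{2\theta/p}\Big(\int_\Omega u^{\beta+1}\,\dif x\Big)^{2(1-\theta)/(\beta+1)}.
\end{equation}
By Remark \ref{theta} (the inequality $r(\theta/p+(1-\theta)/(\beta+1))>1$ with $r=2$, which holds precisely because $\beta<p-1$ makes $\beta+1<p$ so the exponents are strictly subcritical), the total homogeneity $2\theta/p+2(1-\theta)/(\beta+1)$ is strictly less than $1$. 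Combining this interpolation bound with the energy inequality — using $a^{\theta}b^{1-\theta}$-style Young splitting so that both $\int|\nabla u|^p$ and $\int u^{\beta+1}$ from the dissipation control the corresponding factors — produces
\begin{equation}
\frac{\dif}{\dif t}E(t)\le -c\,E(t)^{\kappa},\qquad \kappa:=\frac{2\theta}{p}+\frac{2(1-\theta)}{\beta+1}<1.
\end{equation}

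Finally, integrating this ODE inequality: since $\kappa<1$, the function $E(t)^{1-\kappa}$ decreases at a positive linear rate whenever $E>0$, so $E(t)=0$ for all $t\ge T^*:=E(0)^{1-\kappa}/\big(c(1-\kappa)\big)$; as $E(t)=\frac12\|u(t)\|_{L^2}^2$, this means $u(\cdot,t)\equiv0$ a.e.\ for $t\ge T^*$, and nontriviality for $t<T^*$ follows from $u_0\not\equiv0$ and continuity in $L^2$. The main obstacle I anticipate is the bookkeeping in the subcases $q=p$ versus $q<p$ versus the boundary case $q=\beta+1$: one must verify that in every admissible configuration of $(p,q,\beta,\alpha)$ permitted by the hypotheses there is enough dissipation left after absorbing $\alpha\int u^q$ to dominate \emph{both} interpolation factors — in particular handling the degenerate possibility that $c_1$ or $c_2$ vanishes by checking Lemma \ref{GN} still applies with only the surviving term (e.g.\ using $\|u\|_{L^2}\le|\Omega|^{s}\|u\|_{L^{\beta+1}}$ when only the absorption term survives, which still gives $\kappa=2/(\beta+1)\cdot(\beta+1)/2$-type control once $\beta<1$). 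A secondary technical point is justifying the use of $\varphi=u$ and the differentiation of $\int u^2$ in time, which is standard given the regularity class $\mathbb{U}$ from Theorem \ref{HOR} and a density/Steklov-averaging argument.
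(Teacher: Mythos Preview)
Your overall strategy matches the paper's: test with $\varphi=u$, absorb the source term into the diffusion and absorption, then use the Gagliardo--Nirenberg inequality (Lemma~\ref{GN}) to close a differential inequality $y'+Ky^{\gamma}\le0$ with $\gamma<1$. Two points need correction, however.

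First, your exponent bookkeeping is inverted. You cite Remark~\ref{theta} correctly as giving $2\big(\theta/p+(1-\theta)/(\beta+1)\big)>1$, but then assert this quantity is \emph{less} than $1$ and set $\kappa$ equal to it. In fact from $\|u\|_{L^2}\le C\big(\int|\nabla u|^p+u^{\beta+1}\big)^{\theta/p+(1-\theta)/(\beta+1)}$ one gets $y(t)\le C^2(\text{dissipation})^{1/\gamma}$ with $1/\gamma=2\big(\theta/p+(1-\theta)/(\beta+1)\big)>1$, hence $y'+Ky^{\gamma}\le0$ with $\gamma<1$, which is what forces extinction. Your $\kappa$ should be the reciprocal of what you wrote.

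Second, and more substantively, your claim that ``$2\le Np/(N-p)$ is automatic for the relevant ranges'' is false: the hypotheses allow any $p>1$ (e.g.\ $p=1.1$, $N=3$ gives $Np/(N-p)<2$), so when $1<p<2N/(N+2)$ you cannot take $r=2$ in Lemma~\ref{GN}. The paper treats this as a separate case: choose $r\in(\beta+1,Np/(N-p)]$ with $r<2$, invoke the $L^\infty$ bound $M$ from Theorem~\ref{HOR} to write $y(t)\le M^{2-r}\|u\|_{L^r}^r$, and then apply Lemma~\ref{GN} with this $r$ to get the same form of inequality with $1/\gamma=r\big(\theta/p+(1-\theta)/(\beta+1)\big)>1$. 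Without this step your argument does not cover the full range stated in the theorem. (Incidentally, the paper handles the absorption of $\alpha\int u^q$ by the clean splitting $\int_{\{u\ge1\}}u^q\le\int u^p\le\Lambda_1^{-1}\int|\nabla u|^p$ and $\int_{\{u<1\}}u^q\le\int u^{\beta+1}$, which avoids the case-juggling you describe.)
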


\begin{proof}{By Theorem \ref{GE}, $u$ exists globally in time.} Let $y(t)=\|u\|^2_{L^2(\Omega)}$. Then it satisfies:
\begin{equation}\label{ODE}
\frac{1}{2}y'(t)+\int_\Omega |\nabla u|^p\ \dif x=\alpha\int_\Omega u^q\ \dif
x-\int_\Omega u^{\beta+1}\ \dif x.
\end{equation}
By the assumption that $\beta+1\leq q\leq p$, we have
\begin{equation}\label{uq}
\begin{split}
\int_\Omega u^q\ \dif x&=\int_{\Omega\cap\{u\geq1\}} u^q\ \dif
x+\int_{\Omega\cap\{u\leq1\}} u^q\ \dif x
\leq\int_{\Omega\cap\{u\geq1\}} u^p\ \dif x+\int_{\Omega\cap\{u\leq1\}}
u^{\beta+1}\ \dif x\\
&\leq\int_\Omega\left(u^p+u^{\beta+1}\right)\ \dif x
\leq\frac{1}{\Lambda_1}\int_\Omega |\nabla u|^p\ \dif x+\int_\Omega
u^{\beta+1}\ \dif x,
\end{split}
\end{equation}
where we used the Poincar\'{e}'s inequality
$\Lambda_1\|u\|^p_{L^p(\Omega)}\leq\|\nabla u\|^p_{L^p(\Omega)}$. Combining
(\ref{ODE}) with (\ref{uq}), we find that for
\begin{equation}
D=\left\{\begin{array}{ll}
1-\alpha\max\left\{\D\frac{1}{\Lambda_1},1\right\},&\mbox{if}\
\beta+1<q<p,\\[2mm]
1-\alpha,&\mbox{if}\ \beta+1=q<p,\\[2mm]
1-\D\frac{\alpha}{\Lambda_1},&\mbox{if}\ \beta+1<q=p,
\end{array}\right.
\end{equation}
there holds
\begin{equation}\label{ODI}
\frac{1}{2}y'(t)+D\int_\Omega \left(|\nabla u|^p+u^{\beta+1}\right)\ \dif
x\leq 0.
\end{equation}

Our next goal is to obtain the following differential inequality from
(\ref{ODI}):
\begin{equation}\label{FODI}
y'(t)+Ky(t)^\gamma\leq0,\ \mbox{with}\ K>0,0<\gamma<1.
\end{equation}
Integrating (\ref{FODI}) with $t$:
\begin{equation}
y(t)\leq\left(y^{1-\gamma}(0)-K(1-\gamma)t\right)^\frac{1}{1-\gamma}
\end{equation}
which implies
\begin{equation}
y(t)\rightarrow0\ \mbox{as}\ t\rightarrow
T^*:=\frac{y^{1-\gamma}(0)}{K(1-\gamma)}.
\end{equation}
Thus, the finite time extinction for the solution of (\ref{IBP}) is proved.

To obtain (\ref{FODI}), we divided our proof into two parts:
$p>\frac{2N}{N+2}$ and $1<p<\frac{2N}{N+2}$.

($\mathrm{i}$). If $p\geq\frac{2N}{N+2}$, then $\frac{Np}{N-p}\geq2$ for $p<N$ which implies
that we can choose $r=2$ in (\ref{gn}). While if $p\geq N$, then
$r\in[\beta+1,+\infty)$ which enables us to set $r=2$ in (\ref{gn}). In both
cases, we can obtain
\begin{equation}
\begin{split}
\|u\|_{L^2(\Omega)}&\leq C\|\nabla
u\|^\theta_{L^p(\Omega)}\|u\|^{1-\theta}_{L^{\beta+1}(\Omega)}=C\left(\int_\Omega
|\nabla u|^p\ \dif x\right)^\frac{\theta}{p}\left(\int_\Omega u^{\beta+1}\
\dif x\right)^\frac{1-\theta}{\beta+1}\\
&\leq C\left(\int_\Omega \left(|\nabla u|^p+u^{\beta+1}\right)\ \dif
x\right)^{\frac{\theta}{p}+\frac{1-\theta}{\beta+1}}
\end{split}
\end{equation}
from (\ref{gn}) with $r=2$.
Then
\begin{equation}\label{odi}
C^{-2}y(t)\leq\left(\int_\Omega \left(|\nabla u|^p+u^{\beta+1}\right)\ \dif
x\right)^{2\left(\frac{\theta}{p}+\frac{1-\theta}{\beta+1}\right)}.
\end{equation}
Combining (\ref{odi}) with (\ref{ODI}), we can obtain (\ref{FODI}) with
\begin{equation}
\frac{1}{\gamma}=2\left(\frac{\theta}{p}+\frac{1-\theta}{\beta+1}\right)>1,K=2DC^{-2\gamma}.
\end{equation}

($\mathrm{ii}$). If $1<p<\frac{2N}{N+2}$, let $2>r\in(\beta+1,\frac{Np}{N-p}]$ and
$M=\|u\|_{L^\infty(Q_T)}$. Then we have
\begin{equation}
y(t)=\|u\|_{L^2(\Omega)}=\int_\Omega u^{2-r}u^r\ \dif x\leq
M^{2-r}\|u\|_{L^r(\Omega)}^r.
\end{equation}
By (\ref{gn}) with $r\in(\beta+1,2)$, there
holds
\begin{equation}\label{odi2}
\begin{split}
y(t)&\leq M^{2-r}\left(C\|\nabla
u\|^\theta_{L^p(\Omega)}\|u\|^{1-\theta}_{L^{\beta+1}(\Omega)}\right)^r\\
&\leq
M^{2-r}\left(D^{\frac{\theta}{p}+\frac{1-\theta}{\beta+1}}\right)^{-r}C^r\left(D\int_\Omega
\left(|\nabla u|^p+u^{\beta+1}\right)\ \dif
x\right)^{r\left(\frac{\theta}{p}+\frac{1-\theta}{\beta+1}\right)}.
\end{split}
\end{equation}
Combining (\ref{odi2}) with (\ref{ODI}), we can derive (\ref{FODI}) with
\begin{equation}
\frac{1}{\gamma}=r\left(\frac{\theta}{p}+\frac{1-\theta}{\beta+1}\right)>1,K=2DM^{\gamma(r-2)}C^{-r\gamma}.
\end{equation}
\end{proof}
\begin{rem}
In the case $1<p<2$, Fang, Wang and Li \cite{FWL} obtained some similar
extinction results. The results there needed stronger conditions for the
coefficients of absorption and source terms. Moreover, the initial data was
also been chosen small enough. However, our results hold for any nontrivial initial
data and some $\alpha$ which needn't to be sufficiently small. Besides, our
proof is also simpler.
\end{rem}
Different from Theorem \ref{FTE}, the following theorem shows that finite
time extinction can also occur for $q>p$ and $1<p<2$ with small initial data.
\begin{thm}\label{FTE2}
Assume that $q>p,1<p<2$, then the solution of
(\ref{IBP}) will vanish at finite time provided the initial data is small
enough.
\end{thm}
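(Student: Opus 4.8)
The plan is to build an explicit \emph{global supersolution} of \eqref{IBP} of the separated form $\widehat V(x,t)=A(t)\phi(x)$, where $\phi$ is the first Dirichlet $p$-eigenfunction on a domain slightly larger than $\Omega$ (so that $\phi$ is bounded away from $0$ on $\overline\Omega$) and $A(t)$ solves a scalar ODE that reaches $0$ in finite time, and then to invoke the comparison principle, Proposition \ref{CP}, to force $u\le\widehat V$ and hence $u\equiv 0$ after that time. The two standing hypotheses will enter transparently: $1<p<2$ makes $\phi^{\,p-2}$ bounded \emph{below} on $\overline\Omega$, so the $p$-Laplacian term can dominate; and $q>p$ makes the source $\alpha u^{q-1}$ of strictly lower order than that term once $u$ is small, which is exactly where the smallness of $u_0$ is used.

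In detail, choose $\widetilde\Omega$ with $\Omega\subset\subset\widetilde\Omega$ and let $\phi>0$ be the first eigenfunction of $-\Delta_p$ on $\widetilde\Omega$ under Dirichlet data, normalized as in the proof of Theorem \ref{GE}; set $\underline m=\inf_{\overline\Omega}\phi>0$ and $\overline M=\sup_{\overline\Omega}\phi<\infty$, so that on $\Omega$ one has $\phi^{\,p-2}\ge\overline M^{\,p-2}>0$ (since $p-2<0$) and $\phi^{\,q-2}\le K_0$ for an explicit constant $K_0=K_0(\phi,q)$. For $A=A(t)\ge 0$, the function $\widehat V=A\phi$ satisfies $\widehat V_t-\Delta_p\widehat V=A'\phi+\Lambda_1(\widetilde\Omega)A^{p-1}\phi^{p-1}$, so, dropping the favourable term $A^\beta\phi^\beta\ge 0$, $\widehat V$ is a supersolution as soon as
\[
A'+\Lambda_1(\widetilde\Omega)\,\overline M^{\,p-2}\,A^{p-1}\ \ge\ \alpha\,K_0\,A^{q-1}\qquad\text{whenever }0\le A\le A_0,
\]
and this holds provided $A_0$ is small enough that $\alpha K_0A_0^{\,q-p}\le\tfrac12\Lambda_1(\widetilde\Omega)\overline M^{\,p-2}$ (here $q-p>0$ is essential). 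It then suffices to take $A$ solving $A'=-\tfrac12\Lambda_1(\widetilde\Omega)\overline M^{\,p-2}A^{p-1}$, $A(0)=A_0$, i.e.
\[
A(t)=\Bigl(A_0^{\,2-p}-\tfrac{(2-p)\Lambda_1(\widetilde\Omega)\overline M^{\,p-2}}{2}\,t\Bigr)_{+}^{1/(2-p)},
\]
which is nonincreasing (so it indeed stays $\le A_0$), belongs to $C^1([0,\infty))$ because $\tfrac1{2-p}>1$, and vanishes for all $t\ge T^*:=\dfrac{2A_0^{\,2-p}}{(2-p)\Lambda_1(\widetilde\Omega)\overline M^{\,p-2}}$.

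Now impose on the data the smallness requirement $\|u_0\|_{L^\infty(\Omega)}\le A_0\,\underline m$; then $u_0\le A_0\phi=\widehat V(\cdot,0)$ in $\Omega$, while $\widehat V\ge 0$ on $\partial\Omega\times(0,\infty)$, so $\widehat V$ is an admissible, locally bounded supersolution of \eqref{IBP} in the sense of Definition \ref{WS}, exactly as the barrier used in the proof of Theorem \ref{GE}$(\mathrm{i})$. Let $u$ be the local solution from Theorem \ref{HOR}, defined on its maximal time interval; on every compact subinterval $u$ and $\widehat V$ are bounded, so Proposition \ref{CP} gives $0\le u\le\widehat V\le A_0\overline M$ there. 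These bounds on $\|u\|_{L^\infty}$ (and, via the energy estimate, on $\|\nabla u\|_{L^p}$) are uniform in time, so the standard continuation argument shows $u$ is global and $u\le\widehat V$ on all of $\Omega\times(0,\infty)$. In particular $0\le u(x,t)\le\widehat V(x,t)=0$ for every $t\ge T^*$, which is the asserted finite-time extinction.

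The main obstacle is the supersolution verification --- the bookkeeping that lets the source $\alpha A^{q-1}\phi^{q-1}$ be absorbed by $\Lambda_1(\widetilde\Omega)A^{p-1}\phi^{p-1}$ once $A_0$ is small --- in which both $p<2$ (through $\phi^{\,p-2}\ge\overline M^{\,p-2}$) and $q>p$ (through $A_0^{\,q-p}\to 0$) are indispensable, and where one must check that $A(t)$ never leaves the range $A\le A_0$, which follows from its monotonicity. A secondary point requiring care is the global-existence step: since Theorem \ref{GE} does not cover the range $q>p$, $1<p<2$, one cannot simply quote global existence, but must observe that the comparison bound $u\le\widehat V$ is itself what rules out blow-up, so the whole argument is run on the maximal interval of existence and then bootstrapped.
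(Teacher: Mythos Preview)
Your argument is correct. The paper gives no proof of its own here, deferring entirely to \cite[Theorem~4.1]{YJ}; that reference establishes the analogous extinction result for $u_t-\Delta_p u=\lambda u^q$ with $1<p<2$ by precisely the same scheme you use --- a separable supersolution $g(t)\varphi(x)$ built from the first $p$-eigenfunction on an enlarged domain, with $g$ solving a scalar ODE that vanishes in finite time, followed by comparison. So your approach coincides with the intended one, and your write-up is in fact more self-contained: you spell out why $1<p<2$ gives the lower bound $\phi^{\,p-2}\ge\overline M^{\,p-2}$, why $q>p$ is exactly what lets the source be absorbed for small $A_0$, and you handle the global-existence bootstrap (which is not covered by Theorem~\ref{GE} in this range) explicitly via the barrier bound on the maximal interval.
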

\begin{proof} The proof here is same as the one in {\cite[Theorem 4.1]{YJ}}, we omit it.
\end{proof}
\subsection{Decay}
Let us now consider the decay of the solution.

\begin{thm}\label{decay}
Assume that $\beta\geq1$ and {$p\geq2$}, then the solution of (\ref{IBP}) will not
extinguish in finite time. Assume additionally $\beta\leq q-1$, then there
exists a constant $\epsilon>0$, such that if $u_0\geq0$ and
$\|u_0\|_{L^\infty(\Omega)}<\epsilon$,
then the solution will decay to zero as $t\rightarrow+\infty$. Moreover, we
have the following estimates:
\begin{equation}\label{Decay}
\left\{\begin{aligned}
&0\leq u\leq C_1(t+C_2)^{-\gamma},\gamma=\frac{1}{\beta-1},\mbox{for}\
1<\beta\leq q-1;\\
&0\leq u\leq C_3\me^{-C_4t},\mbox{for}\ \beta=1,q\geq 2.
\end{aligned}\right.
\end{equation}
The constants $C_i,i=1,2,3,4$ appeared above depend on $q,\beta,\alpha$.
\end{thm}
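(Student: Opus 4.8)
The plan is to prove the two assertions — non-extinction, and small-data decay with rates — separately, each by comparison with an explicit barrier through Proposition \ref{CP}.

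\emph{Non-extinction ($\beta\ge1$, $p\ge2$).} I would trap $u$ from below by a \emph{positive} subsolution that never vanishes. Fix a ball $B_R\subset\subset\Omega$, let $\Phi>0$ be a first Dirichlet eigenfunction of $-\Delta_p$ on $B_R$ normalised so that $-\Delta_p\Phi=\lambda\Phi^{p-1}$ in $B_R$, $\Phi=0$ on $\partial B_R$, $0<\Phi\le1$, and set $\underline u(x,t)=g(t)\Phi(x)$ on $B_R$, $\underline u\equiv0$ on $\Omega\setminus B_R$, with $g>0$ the solution of
\begin{equation*}
g'(t)=-\lambda\,g(t)^{p-1}-g(t)^{\beta},\qquad g(0)=\epsilon_0>0 .
\end{equation*}
Using $\underline u_t-\Delta_p\underline u=g'\Phi+\lambda g^{p-1}\Phi^{p-1}$ together with $p\ge2$ and $\beta\ge1$ (so $\Phi^{p-2}\le1$ and $\Phi^{\beta}\le\Phi$ when $0<\Phi\le1$), a direct computation gives $\underline u_t-\Delta_p\underline u\le-\underline u^{\beta}\le-\underline u^{\beta}+\alpha\underline u^{q-1}$, so $\underline u$ is a subsolution on $B_R$, and its zero extension is an admissible weak subsolution on $\Omega$ because the outward flux $|\nabla\underline u|^{p-2}\partial_\nu\underline u\le0$ across $\partial B_R$ (as $\underline u$ decreases to $0$ there). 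The decisive point is that this ODE never reaches $0$: from $t=\int_{g(t)}^{\epsilon_0}\frac{\dif s}{\lambda s^{p-1}+s^{\beta}}$ and $\lambda s^{p-1}+s^{\beta}\le(\lambda+1)s^{\min\{p-1,\beta\}}$ for $s\le1$ with $\min\{p-1,\beta\}\ge1$, one gets $\int_0^{\epsilon_0}\frac{\dif s}{\lambda s^{p-1}+s^{\beta}}=+\infty$, hence $g(t)>0$ for all $t$. Finally, by the strong maximum principle for $\Delta_p$ with zeroth-order absorption there are $t_0\ge0$, a ball $B_R\subset\subset\Omega$ and $\delta>0$ with $u(\cdot,t_0)\ge\delta$ on $B_R$ (around which the construction above is carried out); taking $\epsilon_0\le\delta$, Proposition \ref{CP} on $[t_0,T)$ yields $u(x,t)\ge g(t-t_0)\Phi(x)>0$ on $B_R$, so $u$ cannot become identically zero in finite time.

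\emph{Decay and rates ($\beta\le q-1$, $\|u_0\|_{L^\infty(\Omega)}<\epsilon$).} Here I would bound $u$ from above by the spatially homogeneous function $W(t)$ solving
\begin{equation*}
W'(t)=-W(t)^{\beta}+\alpha W(t)^{q-1},\qquad W(0)=\|u_0\|_{L^\infty(\Omega)} .
\end{equation*}
Since a constant is $\Delta_p$-harmonic, $W(0)\ge u_0$ and $W\ge0$ on $\partial\Omega$, $W$ is a supersolution, so Proposition \ref{CP} gives $0\le u(x,t)\le W(t)$ (this incidentally yields global existence in this regime). Because $q-1\ge\beta$, we may write $-W^{\beta}+\alpha W^{q-1}=-W^{\beta}\bigl(1-\alpha W^{q-1-\beta}\bigr)\le-cW^{\beta}$ with $c>0$ once $\epsilon$ is small enough that $W$ stays in a neighbourhood of $0$ on which $1-\alpha W^{q-1-\beta}\ge c$; then $W$ decreases and $W(t)\to0$. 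Integrating $W'\le-cW^{\beta}$: if $\beta>1$ then $(W^{1-\beta})'\ge c(\beta-1)$, so $W(t)\le\bigl(\|u_0\|_{L^\infty(\Omega)}^{1-\beta}+c(\beta-1)t\bigr)^{-1/(\beta-1)}=C_1(t+C_2)^{-\gamma}$ with $\gamma=\frac{1}{\beta-1}$; if $\beta=1$ (so $q\ge2$) then $W'\le-cW$ gives $W(t)\le\|u_0\|_{L^\infty(\Omega)}\,\me^{-ct}=C_3\me^{-C_4t}$. Combined with $0\le u\le W$, this is (\ref{Decay}).

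The genuinely delicate step is the positivity input in the non-extinction part: it needs a quantitative strong-maximum-principle/Harnack-type statement for the degenerate operator $\Delta_p$ in the presence of the absorption $-u^{\beta}$, and this is precisely where $\beta\ge1$ is indispensable (for $\beta<1$ positivity can be lost — the extinction mechanism of Theorems \ref{FTE} and \ref{FTE2}); the restriction $p\ge2$ in turn is what makes $\min\{p-1,\beta\}\ge1$, i.e.\ what prevents the subsolution ODE from reaching $0$. One must also verify, via the interface-flux argument, that the compactly supported barrier $g(t)\Phi$ is a legitimate weak subsolution across $\partial B_R$. The ODE comparison underlying the decay estimates, by contrast, is routine.
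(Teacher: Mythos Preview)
Your argument is correct, but both halves take a slightly different route from the paper's.

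For non-extinction, the paper does not build a subsolution inside $\Omega$ at all. It simply observes that the solution $v$ of the pure absorption problem $v_t-\Delta_p v=-v^{\beta}$ with the \emph{same} initial and boundary data is already a subsolution of (\ref{IBP}) (since the dropped term $\alpha v^{q-1}$ is nonnegative), and then invokes \cite[Theorem~3.3]{YGG}, which asserts that $v$ does not extinguish when $p\ge2$ and $\beta\ge1$. Comparison then gives $u\ge v\not\equiv0$. This bypasses the step you correctly flag as delicate: you need $u(\cdot,t_0)\ge\delta$ on a ball, which calls for a Harnack/strong-maximum-principle result for the degenerate operator with absorption. Your explicit barrier $g(t)\Phi(x)$ and the divergence argument $\int_0^{\epsilon_0}(\lambda s^{p-1}+s^{\beta})^{-1}\,\dif s=+\infty$ under $\min\{p-1,\beta\}\ge1$ are clean and make the roles of $p\ge2$ and $\beta\ge1$ transparent; the price is that extra positivity input, whereas the paper trades it for a citation.

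For the decay rates, the two arguments are essentially the same barrier comparison with a spatially constant supersolution; the only difference is cosmetic. You take $W$ as the exact ODE solution of $W'=-W^{\beta}+\alpha W^{q-1}$ and then bound $W$ via $W'\le -cW^{\beta}$, while the paper writes down the final profiles $C_1(t+C_2)^{-1/(\beta-1)}$ and $C_1\me^{-C_2 t}$ directly and checks $\mathcal L_p w\ge0$ by choosing $C_1,C_2$. Both require, in the borderline case $\beta=q-1$, the extra hypothesis $\alpha<1$ (your condition ``$1-\alpha W^{q-1-\beta}\ge c$'' degenerates to $1-\alpha>0$ there); the paper states this explicitly, and you should too.
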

\begin{proof}
By \cite[Theorem 3.3]{YGG}, we know that the solution of
\begin{equation}
\left\{
\begin{aligned}
v_t-\Delta_pv&=-|v|^{\beta-1}v,&x\in\Omega,t>0,\\
v&=0,&x\in\partial\Omega,t>0,\\
v(x,0)&=u_0(x),&x\in\Omega.
\end{aligned}
\right.
\end{equation}
will not extinguish in finite time if $p\geq2,\beta\geq1,u_0(x)\in
W_0^{1,p}(\Omega)\cap L^\infty(\Omega),u_0(x)\not\equiv0$. As was shown in
Theorem \ref{HOR}, $u\geq 0$. Thus, $v$ is a sub-solution of \eqref{IBP}. By the comparison principle, $u$ will not extinguish in finite time.

Let us now consider the decay of the solution of (\ref{IBP}). For
convenience, we define $\mathcal{L}_p$ as:
{$\mathcal{L}_p\varphi=\varphi_t-\Delta_p\varphi-\alpha|\varphi|^{q-2}\varphi+
|\varphi|^{\beta-1}\varphi$.}

If $1<\beta\leq q-1$, let
\begin{equation}
w(x,t)=C_1(t+C_2)^{-\gamma},\gamma=\frac{1}{\beta-1},
\end{equation}
where $C_1,C_2>0$ are constants to be decided later. By a direct computation,
we have
\begin{equation}
\mathcal{L}_pw=(t+C_2)^{-\gamma-1}\left(-\gamma C_1+C_1^\beta-\alpha
C_1^{q-1}(t+C_2)^{-(q-1-\beta)\gamma}\right).
\end{equation}
If $\beta<q-1$, let $C_1,C_2>0$ satisfy:
$(2\gamma)^\gamma\leq C_1\leq(2\alpha)^\frac{1}{\beta-q+1}C_2^\gamma$,
then we have $\mathcal{L}_pw\geq0$. Assume additionally that
$\|u_0\|_{L^\infty(\Omega)}\leq C_1C_2^{-\gamma}$, then we have $w(x,0)\geq
u_0(x)$. Thus, we have shown that $C_1,C_2$ satisfy
\begin{equation}\label{C1C2}
\max\left\{\|u_0\|_{L^\infty(\Omega)}C_2^\gamma,(2\gamma)^\gamma\right\}\leq
(2\alpha)^\frac{1}{\beta-q+1}C_2^\gamma.
\end{equation}
In order (\ref{C1C2}) to be satisfied, we need
\begin{equation}\label{C1C21}
\|u_0\|_{L^\infty(\Omega)}\leq\epsilon:=(2\alpha)^\frac{1}{\beta-q+1}
\end{equation}
and
\begin{equation}\label{C1C22}
C_2\geq2^{\frac{1}{\gamma(q-\beta-1)}+1}\gamma.
\end{equation}

For $C_1,C_2$ satisfying (\ref{C1C2}) and (\ref{C1C22}), we know that $w$ is
a super-solution, which implies that
\begin{equation}
0\leq u\leq C_1(t+C_2)^{-\gamma},\gamma=\frac{1}{\beta-1},\mbox{for}\
1<\beta<q-1
\end{equation}
provided $u_0$ satisfies (\ref{C1C21}).

If $1<\beta=q-1$, assume additionally that $\alpha<1$, we can still obtain
the first estimate in (\ref{Decay}) for $C_1,C_2$ and $u_0$ satisfying
\begin{equation}
C_1\geq\max\left\{C_2^\gamma,\left(\frac{\gamma}{1-\alpha}\right)^\gamma\right\}.
\end{equation}

If $\beta=1,q\geq2$, let
\begin{equation}
w=C_1\me^{-C_2t}
\end{equation}
with
\begin{equation}
\left\{\begin{aligned}
&\alpha C_1^{q-2}+C_2\leq1,\|u_0\|_{L^\infty(\Omega)}\leq C_1,\mbox{for}\ q>2;\\
&0<C_2\leq1-\alpha,\|u_0\|_{L^\infty(\Omega)}\leq C_1,\mbox{for}\ q=2.
\end{aligned}\right.
\end{equation}
We can still verify that $w$ is a super-solution of \eqref{IBP}. Then we obtain the desired result by comparison principle. Thus, the proof is complete.
\end{proof}
\section{Finite time blowup}\setcounter{equation}{0}
In this section we will use two different methods to show that the solution of (\ref{IBP}) will blow up in finite time. We first introduce the following blowup result which is based on the construction of a self-similar sub-solution and the comparison principle.
\begin{thm}\label{LID}
Suppose that $q>\max\{p,2,\beta+1\}$. Then the solution of (\ref{IBP}) will blow up in finite time for some large $u_0(x)$ satisfying $u_0(x)>0$ in $\Omega'\subset\Omega$.
\end{thm}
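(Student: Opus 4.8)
The plan is to construct an explicit self-similar sub-solution supported in a ball contained in $\Omega'$ and apply the weak comparison principle (Proposition \ref{CP}). Since $q>\max\{p,2,\beta+1\}$, the source term $\alpha|u|^{q-2}u$ dominates both the absorption term $-|u|^{\beta-1}u$ and the diffusion $\Delta_p u$ once $u$ is large, so I expect blowup to be driven purely by the ODE $u_t = \alpha u^{q-1}$, whose solutions blow up in finite time. First I would fix a ball $B(x_0,R)\subset\Omega'$ and look for a sub-solution of the form
\begin{equation}
\underline{u}(x,t) = (T-t)^{-\lambda}\, W\!\left(\frac{|x-x_0|}{(T-t)^{\mu}}\right),
\end{equation}
with $W$ a smooth nonnegative compactly supported profile (for instance $W(\xi) = A\bigl(1-\xi^2/\rho^2\bigr)_+^{k}$ for suitable exponent $k>1$ making $|\nabla W|^{p-2}\nabla W$ admissible), and with the scaling exponents $\lambda,\mu>0$ chosen so that the nonlinear source term balances the time derivative: this forces $\lambda(q-1)=\lambda+1$, i.e. $\lambda = 1/(q-2)$, which is positive precisely because $q>2$. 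The exponent $\mu$ is then chosen (and can be taken small) so that the $p$-Laplacian term, which scales like $(T-t)^{-\lambda(p-1)-2\mu}$, is of lower order relative to the source; this uses $q>p$ in the form $\lambda(q-1) > \lambda(p-1)$ after absorbing the $\mu$-contribution by taking $T$ small. Similarly the absorption term scales like $(T-t)^{-\lambda\beta}$ and is negligible because $q>\beta+1$ gives $\lambda(q-1)>\lambda\beta$.

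Next I would carry out the verification that $\mathcal{L}_p\underline{u}\le 0$ in the weak sense on the support of $\underline{u}$: substituting the ansatz, collecting powers of $(T-t)$, and using the three inequalities $q-1>1$ (so the leading source term beats $\partial_t$ with room to spare after fixing the constant $A$), $q-1>p-1$, and $q-1>\beta$, one checks that for $T$ sufficiently small all the lower-order terms are dominated. Care must be taken at the free boundary $|x-x_0| = \rho (T-t)^{\mu}$: since $W$ vanishes there together with a sufficient number of derivatives, $\underline{u}$ is an admissible element of $\mathbb{V}$ and the distributional inequality holds across the interface (this is standard for such barrier constructions, cf. \cite{GP,GSS}). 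At $t=0$ the sub-solution is $\underline{u}(x,0) = T^{-\lambda}W(|x-x_0|/T^{\mu})$, which is bounded and supported in a small ball; choosing $u_0(x)\ge \underline{u}(x,0)$ — which is possible for some large $u_0$ with $u_0>0$ on $\Omega'$ since $\underline{u}(\cdot,0)\in L^\infty$ has small support — gives the ordering of initial data. Finally, also requiring $\underline{u}=0\le u$ on $\partial\Omega$ (automatic since $\mathrm{supp}\,\underline{u}(\cdot,t)\subset B(x_0,R)\subset\Omega$ for $t<T$, shrinking $T$ if necessary so that $\rho T^{\mu}<R$), Proposition \ref{CP} yields $u\ge \underline{u}$ a.e. in $Q_T$, and since $\underline{u}(x,t)\to\infty$ as $t\to T^-$ for $x$ near $x_0$, the solution $u$ cannot exist as a bounded weak solution past time $T$.

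The main obstacle is the simultaneous balancing of all three scaling exponents while keeping the profile $W$ admissible for the degenerate/singular $p$-Laplacian: one needs $W$ regular enough that $|\nabla W|^{p-2}\nabla W$ is weakly differentiable up to the free boundary (which constrains the exponent $k$ in terms of $p$), yet the computation of $\Delta_p \underline{u}$ near $\xi=0$ (where $\nabla W = 0$ and the $p$-Laplacian may be singular for $1<p<2$) must be handled — typically by choosing $W$ with a nondegenerate maximum or, alternatively, by using a sub-solution that is a small perturbation so that the sign of $\mathcal{L}_p\underline{u}$ is controlled by the source term uniformly. A secondary technical point is justifying the weak differential inequality at the interface where classical derivatives jump; this is routine but must be stated carefully. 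Once these are in place, the rest is bookkeeping with powers of $(T-t)$ and a choice of the small parameter $T$.
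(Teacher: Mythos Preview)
Your approach is essentially that of the paper: a self-similar sub-solution with temporal exponent $\lambda=1/(q-2)$, verification that $\mathcal{L}_p\underline u\le 0$ via the three inequalities $q>2$, $q>p$, $q>\beta+1$, and then Proposition \ref{CP}. The paper resolves your stated ``main obstacle'' by taking the profile $V(y)=1+\tfrac{A}{\sigma}-\tfrac{y^{\sigma}}{\sigma A^{\sigma-1}}$ with $\sigma=p/(p-1)$, so that $|V'|^{p-2}V'=-y/A$ is linear and hence $\Delta_p$ of the profile equals the constant $-N/A$ everywhere (this is the Li--Xie adaptation \cite{LX} of Souplet--Weissler \cite{SW}), which removes all regularity worries at both the origin and the free boundary; note also that the diffusion term scales as $(T-t)^{-\lambda(p-1)-p\mu}$, not $-2\mu$, so the correct constraint on $\mu$ is $p\mu<\lambda(q-p)$.
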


\begin{proof} Without loss of generality, we assume that
$0\in\Omega$. Define $v(x,t)$ as:
\begin{equation}\label{DV}
v(x,t)=\frac{1}{(1-\varepsilon t)^k}V\left(\frac{|x|}{(1-\varepsilon
t)^m}\right),\ t_0\leq t<\frac{1}{\vep},
\end{equation}
where
\begin{equation}
V(y)=1+\frac{A}{\sigma}-\frac{y^\sigma}{\sigma A^{\sigma-1}}, y\geq0,
\end{equation}
and
\begin{equation}\label{cs}
\sigma=\frac{p}{p-1},k=\frac{1}{q-2},1<m<\frac{q-p}{p(q-2)},A>\frac{2k}{m},0<\vep<\frac{\alpha}{k\left(1+\frac{A}{\sigma}\right)}.
\end{equation}
Let
\begin{equation}
R=\left(A^{\sigma-1}(\sigma+A)\right)^\frac{1}{\sigma},\ D:=\left\{(x,t)\
\big|\ t_0\leq t<\frac{1}{\vep},|x|<R(1-\vep t)^m\right\},
\end{equation}
then $V(y)\geq0$ is smooth in $D$ and $v(y)<0$ if $y>R$. Moreover, $V(y)$
satisfies
\begin{equation}\label{PV}
\left\{\begin{array}{ll}
1\leq V(y)\leq 1+\D\frac{A}{\sigma},-1\leq V'(y)\leq 0,&\ \mbox{if}\ 0\leq
y\leq A;\\[3mm]
0\leq V(y)\leq 1,-\D\frac{R^{\sigma-1}}{A^{\sigma-1}}\leq V'(y)\leq-1,&\
\mbox{if}\ A\leq y\leq R;\\[3mm]
\left(|V'|^{p-2}V'\right)'+\D\frac{N-1}{y}|V'|^{p-2}V'=-\D\frac{N}{A}.&
\end{array}\right.
\end{equation}
Define
\begin{equation}
\mathcal{L}_pv=v_t-\Delta_pv-\alpha|v|^{q-2}v+|v|^{\beta-1}v,
\end{equation}
then
\begin{equation}
\mathcal{L}_pv=\frac{\vep(kV+myV')}{(1-\vep
t)^{k+1}}-\frac{\left(|V'|^{p-2}V'\right)'+\frac{N-1}{y}|V'|^{p-2}V'}{(1-\vep
t)^{(k+m)(p-1)+m}}-\frac{\alpha V^{q-1}}{(1-\vep
t)^{k(q-1)}}+\frac{V^\beta}{(1-\vep t)^{k\beta}}.
\end{equation}
By (\ref{cs}), we can easily see that
$k+1=k(q-1),k\beta<k+1,(k+m)(p-1)+m<k+1$. Then, for
$0\leq\frac{1}{\vep}-t_0\ll 1$ and $t_0\leq t<\frac{1}{\vep}$, if
$y\in[0,A]$,
\begin{equation}\label{SS1}
\begin{split}
\mathcal{L}_pv&=\frac{1}{(1-\vep
t)^{k+1}}\Bigg\{\vep(kV+myV')+\frac{N}{A}(1-\vep t)^{k+1-m-(k+m)(p-1)}-\alpha
V^{q-1}\\
&\quad\,+V^\beta(1-\vep t)^{k+1-k\beta}\Bigg\}\\
&\leq\frac{1}{(1-\vep t)^{k+1}}\Bigg\{\vep
k(1+\frac{A}{\sigma})+\frac{N}{A}(1-\vep t)^{k+1-m-(k+m)(p-1)}-\alpha\\
&\quad\,+V^\beta(1-\vep t)^{k+1-k\beta}\Bigg\}\\
&\leq 0,\ \mbox{for}\ \vep\ll\frac{\alpha}{k\left(1+\frac{A}{\sigma}\right)}.
\end{split}
\end{equation}
Similarly, if $y\in[A,R]$,
\begin{equation}\label{SS2}
\begin{split}
\mathcal{L}_pv&\leq\frac{1}{(1-\vep
t)^{k+1}}\left\{\vep(k-mA)+\frac{N}{A}(1-\vep t)^{k+1-m-(k+m)(p-1)}+(1-\vep
t)^{k+1-k\beta}\right\}\\
&\leq0.
\end{split}
\end{equation}
Thus, we have prove that $\mathcal{L}_pv\leq0$ in $D$. In order for $v(x,t)$ to be
a sub-solution, we also need to choose suitable initial data and
boundary value. Let $t_0$ be such that $u_0(x)>0$ in $B(0,R(1-\vep
t_0)^m)\subset\Omega$ and $u_0(x)\geq v(\cdot,t_0)$ in $B(0,R(1-\vep t_0)^m)$.
According to Theorem \ref{HOR} and the definition of $v,u(x,t)\geq0=v(x,t)$ in $\partial B(0,R(1-\vep t)^m)\times(t_0,\frac{1}{\vep})$. Thus, we have shown that $v(x,t+t_0)$ is a sub-solution for \eqref{IBP} in $D(t_0):=\{(x,t)|0\leq t\leq\frac{1}{\vep}-t_0,|x|<R(1-\vep(t+t_0))^m\}$. By Proposition \ref{CP},
\begin{equation}
u(x,t)\geq v(x,t+t_0),\ (x,t)\in D(t_0).
\end{equation}
Noticing that $\lim_{t\rightarrow1/\vep}v(0,t)\rightarrow+\infty$,
we have $u$ must blow up at a finite time
$T\leq\frac{1}{\vep}-t_0<\infty$.
\end{proof}

\begin{rem}
If $1<p<2$ we can also choose $m$ such that $0<m<\frac{2-p}{p(q-2)}$ in
(\ref{cs}).
\end{rem}

\begin{rem}
The method we used above is first introduced by Souplet and Weissler in
\cite{SW} for $p=2$. Li and Xie developed this method in \cite{LX} for
$p>2$. In our latest papers \cite{ZL,ZL1}, we used this method to study the blowup
results of the initial boundary problem for a p-Laplacian  parabolic equation
with a nonlinear gradient term.
\end{rem}
Next, we will introduce some blowup results whose proofs are based on the
energy method and concavity method which were also used in \cite{AD,LX,YJ,JNZ}
and the references therein. In the proof of our desired results, the
following lemma concerning the so-called ``energy'' is useful.

\begin{lem}\label{NE}
Let
\begin{equation}
E(t)=\int_\Omega\left(\frac{1}{p}|\nabla
u|^p+\frac{1}{\beta+1}u^{\beta+1}-\frac{\alpha}{q}u^q\right)\ \dif x.
\end{equation}
If $E(0)<0$, then $E(t)<0$ for all $t>0$.
\end{lem}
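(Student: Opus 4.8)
The plan is to differentiate the energy functional $E(t)$ along the flow and show that $E'(t) \le 0$, so that once $E$ starts negative it stays negative. First I would use the weak formulation from Definition \ref{WS} with the test function $\varphi = u_t$ (this is admissible since, by Theorem \ref{HOR}, $u_t \in L^2(Q_T)$ and $u \in L^\infty(0,T;W_0^{1,p})$), which yields
\[
\int_\Omega u_t^2\ \dif x + \int_\Omega |\nabla u|^{p-2}\nabla u\cdot\nabla u_t\ \dif x = -\int_\Omega u^{\beta}u_t\ \dif x + \alpha\int_\Omega u^{q-1}u_t\ \dif x .
\]
Recognizing each term on the right and the gradient term on the left as time derivatives of the corresponding pieces of $E(t)$ — namely $\frac{\dif}{\dif t}\frac1p\int_\Omega|\nabla u|^p = \int_\Omega|\nabla u|^{p-2}\nabla u\cdot\nabla u_t$, $\frac{\dif}{\dif t}\frac{1}{\beta+1}\int_\Omega u^{\beta+1} = \int_\Omega u^\beta u_t$, and $\frac{\dif}{\dif t}\frac{\alpha}{q}\int_\Omega u^q = \alpha\int_\Omega u^{q-1}u_t$ — the identity rewrites as
\[
\int_\Omega u_t^2\ \dif x + E'(t) = 0,
\]
so $E'(t) = -\int_\Omega u_t^2\ \dif x \le 0$.

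From monotonicity the conclusion is immediate: if $E(0) < 0$ then $E(t) \le E(0) < 0$ for all $t > 0$. I would state this as the final line of the proof.

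The main obstacle is the rigorous justification of the chain-rule / integration-by-parts steps, i.e. that $t \mapsto \frac1p\int_\Omega|\nabla u|^p\,\dif x$ is absolutely continuous with the claimed derivative and that $u_t$ is a legitimate test function in \eqref{ws}. For the degenerate or singular $p$-Laplacian one cannot simply differentiate under the integral; the standard remedy is to work first with the smooth approximating solutions used in the proof of Theorem \ref{HOR} (cited from \cite{AS,ASS,GSS,JS}), for which the energy identity $E_n'(t) = -\int_\Omega (u_n)_t^2\,\dif x$ holds classically, and then pass to the limit using the a priori bounds $u_n \in L^\infty(0,T;W_0^{1,p}(\Omega))$, $(u_n)_t \in L^2(Q_T)$ together with lower semicontinuity of the $W^{1,p}$-norm to obtain $E(t) \le E(0)$ for a.e. $t$. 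Since only the inequality $E(t) < 0$ is needed, this weaker (a.e.) conclusion obtained by semicontinuity already suffices, which is why I would not belabor the full absolute-continuity argument and instead invoke the regularized problem.
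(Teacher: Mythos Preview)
Your proof is correct and follows essentially the same route as the paper's: both compute $E'(t) = -\int_\Omega u_t^2\,\dif x \le 0$ and conclude $E(t)\le E(0)<0$. The paper carries out only the formal differentiation without addressing the regularity issues you raise, so your version is in fact more careful on that point.
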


\begin{proof} By a direct computation, we can see that
\begin{equation}
\begin{split}
E'(t)&=\int_\Omega\left(|\nabla u|^{p-2}\nabla u\cdot\nabla u_t+u^\beta
u_t-\alpha u^{q-1}u_t\right)\ \dif x\\
&=\int_\Omega\left(-\Delta_pu+u^\beta-\alpha u^{q-1}\right)u_t\ \dif x
=-\int_\Omega u_t^2\ \dif x\leq 0.
\end{split}
\end{equation}
Hence, $E(t)\leq E(0)<0$ for all $t>0$.
\end{proof}
The following theorem is the main result of this section.
\begin{thm}\label{BU}
Suppose $u_0(x)$ satisfies
\begin{equation}\label{E0}
\int_\Omega\left(\frac{1}{p}|\nabla
u_0|^p+\frac{1}{\beta+1}u_0^{\beta+1}-\frac{\alpha}{q}u_0^q\right)\ \dif x<0.
\end{equation}
then the solution of (\ref{IBP}) will blow up in finite time provided that
one of the following cases occurs:

$\mathrm{(a)}$ $0<\beta<\min\{1,p-1\},q>\max\{p,2\}$;

$\mathrm{(b)}$ $q=p,1<\beta<p-1$;

$\mathrm{(c)}$ $\beta=p-1,q>\max\{p,2\}$;

$\mathrm{(d)}$ $1<\beta<p-1,q>p>2$;

$\mathrm{(e)}$ $\beta+1=q=p>2$.

$\mathrm{(f)}$ $q>\beta+1>p>2$, and $\|u_0\|_{L^2(\Omega)}^2$ is large
enough.
\end{thm}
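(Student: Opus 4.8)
The plan is to argue by contradiction using an energy/concavity-type estimate built on Lemma~\ref{NE}. Assume the solution $u$ of \eqref{IBP} is global, and set $y(t)=\|u(t)\|_{L^2(\Omega)}^2$. Taking $\varphi=u$ as a test function in \eqref{ws} — admissible since $u\in\mathbb U$ and $u\ge0$ by Theorem~\ref{HOR} — and differentiating in $t$ gives
\[
\tfrac{1}{2} y'(t)=-\int_\Omega|\nabla u|^p\,\dif x-\int_\Omega u^{\beta+1}\,\dif x+\alpha\int_\Omega u^q\,\dif x .
\]
I would then eliminate the source term via the definition of $E$ in Lemma~\ref{NE}: since $\alpha\int_\Omega u^q=\frac{q}{p}\int_\Omega|\nabla u|^p\,\dif x+\frac{q}{\beta+1}\int_\Omega u^{\beta+1}\,\dif x-qE(t)$, this turns into the basic identity
\[
\tfrac{1}{2} y'(t)=\Big(\tfrac{q}{p}-1\Big)\int_\Omega|\nabla u|^p\,\dif x+\Big(\tfrac{q}{\beta+1}-1\Big)\int_\Omega u^{\beta+1}\,\dif x-qE(t).
\]

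The purpose of the list (a)--(f) is exactly to ensure that in each case $q\ge p$, $q\ge\beta+1$ and $q>2$: in (a),(c) this follows from $q>\max\{p,2\}$; in (d),(f) from $q>p>2$; in (b),(e) one has $q=p$, and the constraints $1<\beta<p-1$, respectively $\beta+1=p=q$, force $p>2$, while the coefficients $\frac{q}{p}-1$ (and in (e) also $\frac{q}{\beta+1}-1$) vanish but stay nonnegative. Hence the first two integrals in the identity are $\ge0$, and Lemma~\ref{NE} under hypothesis \eqref{E0} gives $E(t)\le E(0)<0$ for all $t$. Writing $H(t):=-E(t)\ge -E(0)>0$, we obtain $y'(t)\ge 2qH(t)>0$; moreover the computation in Lemma~\ref{NE} yields $H'(t)=\int_\Omega u_t^2\,\dif x\ge0$.

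The last step is an ordinary differential inequality in the spirit of the concavity method. By Cauchy--Schwarz, $\big(y'(t)\big)^2=4\big(\int_\Omega uu_t\,\dif x\big)^2\le 4y(t)\int_\Omega u_t^2\,\dif x=4y(t)H'(t)$, so using $y'\ge2qH$ again, $H'(t)\ge\frac{(y'(t))^2}{4y(t)}\ge\frac{q}{2}\,\frac{y'(t)H(t)}{y(t)}$, i.e. $(\log H)'(t)\ge\frac{q}{2}(\log y)'(t)$. Integrating from $0$ to $t$ gives $H(t)\ge H(0)\big(y(t)/y(0)\big)^{q/2}$, whence
\[
y'(t)\ge 2qH(0)\,y(0)^{-q/2}\,y(t)^{q/2},\qquad \tfrac{q}{2}>1 .
\]
Integrating this Bernoulli inequality shows $y(t)\to+\infty$ as $t\uparrow T_1$ for some finite $T_1$ depending only on $q$, $y(0)=\|u_0\|_{L^2(\Omega)}^2$ and $H(0)=-E(0)$. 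Since $\Omega$ is bounded this forces $\|u(t)\|_{L^\infty(\Omega)}\to\infty$, which contradicts the fact that, by Theorem~\ref{HOR}, $u$ is bounded on $Q_T$ for every $T$ strictly below its maximal existence time; therefore that time is $\le T_1<\infty$ and $u$ blows up in finite time.

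The routine part is the sign bookkeeping across the six cases; the points needing care are the degenerate cases (b) and (e), where one (both) of the coefficients above vanishes and one must use $p=q>2$ to keep the exponent $q/2$ strictly above $1$, and case (f) with $\beta+1>p$, where the energy identity still delivers the conclusion but the extra size assumption ``$\|u_0\|_{L^2(\Omega)}^2$ large'' is the safe way to make the resulting superlinear ODE genuinely blow up rather than merely grow. One should also note that differentiating $y(t)$ and using $\varphi=u$ in \eqref{ws}, together with the identity $E'(t)=-\int_\Omega u_t^2\,\dif x$ from Lemma~\ref{NE}, is legitimate precisely because of the regularity $\partial_t u\in L^2(Q_T)$ afforded by the class $\mathbb U$ in Theorem~\ref{HOR}.
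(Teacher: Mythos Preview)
Your argument is correct, and it takes a genuinely different route from the paper. The paper rewrites the identity as
\[
\frac{1}{2p}y'(t)=-E(t)+\Big(\frac{1}{\beta+1}-\frac{1}{p}\Big)\int_\Omega u^{\beta+1}\,\dif x+\alpha\Big(\frac{1}{p}-\frac{1}{q}\Big)\int_\Omega u^{q}\,\dif x,
\]
and then treats each case $(\mathrm a)$--$(\mathrm f)$ separately: when both coefficients are $\ge0$ it drops terms and applies H\"older to reach $y'\ge Cy^{q/2}$ or $y'\ge Cy^{(\beta+1)/2}$; in $(\mathrm e)$, where both coefficients vanish, it invokes a concavity lemma from \cite{LX}; and in $(\mathrm f)$, where $\frac{1}{\beta+1}-\frac{1}{p}<0$, it absorbs the bad term via Young's inequality, which is precisely where the extra hypothesis ``$\|u_0\|_{L^2}^2$ large'' enters. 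Your decomposition instead eliminates $\int_\Omega u^q$, producing coefficients $\frac{q}{p}-1$ and $\frac{q}{\beta+1}-1$ that are nonnegative in \emph{all} six cases, and then a single Levine-type concavity step (Cauchy--Schwarz on $y'=2\int uu_t$, combined with $H'=\int u_t^2$ and $y'\ge 2qH$) closes everything at once. This is cleaner, and in fact strictly stronger: your proof does not use the large-$L^2$ assumption in $(\mathrm f)$ at all, so your closing remark that this assumption is ``the safe way'' is unnecessary --- you have shown it is superfluous. The paper's approach has the minor advantage of producing explicit upper bounds for the blow-up time case by case; yours gives one uniform bound depending on $q$, $y(0)$ and $-E(0)$.
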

\begin{proof}Let $y(t)=\|u\|_{L^2(\Omega)}^2$, then it
satisfies
\begin{equation}
\frac{1}{2}y'(t)=\int_\Omega uu_t\ \dif
x=\int_\Omega\left(u\Delta_pu-u^{\beta+1}+\alpha u^q\right)\ \dif x
=\int_\Omega\left(-|\nabla u|^p-u^{\beta+1}+\alpha u^q\right)\ \dif x.
\end{equation}
By Lemma \ref{NE}, we can get
\begin{equation}\label{EODI}
\begin{split}
\frac{1}{2p}y'(t)&=\int_\Omega\left(-\frac{1}{p}|\nabla
u|^p-\frac{1}{p}u^{\beta+1}-\frac{\alpha}{p}u^q\right)\ \dif x\\
&=-E(t)+\left(\frac{1}{\beta+1}-\frac{1}{p}\right)\int_\Omega u^{\beta+1}\
\dif x+\alpha\left(\frac{1}{p}-\frac{1}{q}\right)\int_\Omega u^q\ \dif x\\
&>\left(\frac{1}{\beta+1}-\frac{1}{p}\right)\int_\Omega u^{\beta+1}\ \dif
x+\alpha\left(\frac{1}{p}-\frac{1}{q}\right)\int_\Omega u^q\ \dif x.
\end{split}
\end{equation}
Let us now estimate (\ref{EODI}) furthermore in different cases.

$\mathrm{(a)}$. $0<\beta<\min\{1,p-1\},q>\max\{p,2\}$. In this case, by
H\"{o}lder's inequality, (\ref{EODI}) can be rewritten as
\begin{equation}
\frac{1}{2p}y'(t)\geq\alpha\left(\frac{1}{p}-\frac{1}{q}\right)\int_\Omega
u^q\ \dif x
\geq\alpha\left(\frac{1}{p}-\frac{1}{q}\right)|\Omega|^\frac{2-q}{2}y^\frac{q}{2},
\end{equation}
i.e.
\begin{equation}\label{OI}
y'(t)\geq
2p\alpha\left(\frac{1}{p}-\frac{1}{q}\right)|\Omega|^\frac{2-q}{2}y^\frac{q}{2}(t).
\end{equation}
Integrating (\ref{OI}) in $t$, we have
\begin{equation}
y(t)\geq\left(y^\frac{2-q}{2}(0)-p\alpha(q-2)\left(\frac{1}{p}-\frac{1}{p}\right)|\Omega|^\frac{2-q}{2}t\right)^\frac{2}{2-q}
\end{equation}
which implies that
\begin{equation}
y(t)\rightarrow+\infty,\ \mbox{as}\ t\rightarrow
T_1^*:=\frac{y^\frac{2-q}{2}(0)|\Omega|^\frac{2}{2-q}}{p\alpha(q-2)\left(\frac{1}{p}-\frac{1}{q}\right)}.
\end{equation}

$\mathrm{(b)}$. $q=p,1<\beta<p-1$. In this case, there holds
\begin{equation}
y'(t)>2p\left(\frac{1}{\beta+1}-\frac{1}{p}\right)\int_\Omega u^{\beta+1}\
\dif
x\geq2p\left(\frac{1}{\beta+1}-\frac{1}{p}\right)|\Omega|^\frac{1-\beta}{2}y^\frac{\beta+1}{2}(t).
\end{equation}
Then
\begin{equation}
y(t)\geq\left(y^\frac{1-\beta}{2}(0)-p(\beta-1)\left(\frac{1}{\beta+1}-\frac{1}{p}\right)|\Omega|^\frac{1-\beta}{2}t\right)^\frac{2}{1-\beta}.
\end{equation}
Thus
\begin{equation}
y(t)\rightarrow+\infty,\ \mbox{as}\ t\rightarrow
T_2^*:=\frac{y^\frac{1-\beta}{2}(0)|\Omega|^\frac{2}{1-\beta}}{p(\beta-1)\left(\frac{1}{\beta+1}-\frac{1}{p}\right)}.
\end{equation}

$\mathrm{(c)}$. $\beta=p-1,q>\max\{p,2\}$. Similarly as (1), we can derive
that $y(t)\rightarrow+\infty$, as $t\rightarrow T_3^*=T_1^*$.

$\mathrm{(d)}$. $1<\beta<p-1,q>p>2$. We can rewrite (\ref{EODI}) as
\begin{equation}
\begin{split}
y'(t)&\geq2p\left(\frac{1}{\beta+1}-\frac{1}{p}\right)|\Omega|^\frac{1-\beta}{2}y^\frac{\beta+1}{2}(t)+2p\alpha\left(\frac{1}{p}-\frac{1}{q}\right)
|\Omega|^\frac{2-q}{2}y^\frac{q}{2}(t)\\
&\geq4p\sqrt{\alpha\left(\frac{1}{\beta+1}-\frac{1}{p}\right)\left(\frac{1}{p}-\frac{1}{q}\right)}|\Omega|^\frac{3-q-\beta}{4}y^\frac{q+\beta+1}{4}(t).
\end{split}
\end{equation}
Then $y(t)\rightarrow+\infty$, as $t\rightarrow
T_4^*\leq\min\{T_1^*,T_2^*,T'\}$ with
\begin{equation}
T':=\frac{y^\frac{3-q-\beta}{4}(0)|\Omega|^\frac{4}{3-q-\beta}}{p(q+\beta-3)\sqrt{\alpha\left(\frac{1}{\beta+1}-\frac{1}{p}\right)\left(\frac{1}{p}
-\frac{1}{q}\right)}}.
\end{equation}

$\mathrm{(e)}$. $\beta+1=q=p$. If this happens, then we can only derive from
(\ref{EODI}) that $y'(t)>0$ which can not be used to show that
$y(t)\rightarrow+\infty$ as $t\rightarrow\widetilde{T}<\infty$. However, if
$p>2$, we can still obtain desired result by the concavity method. The proof
here is same as the one of \cite[Lemma 3.4]{LX}, here we just provided the
final ordinary inequality below:
\begin{equation}
y'(t)\geq\frac{y'(0)}{y^\frac{p}{2}(0)}y^\frac{p}{2}(t).
\end{equation}

$\mathrm{(f)}$. $q>\beta+1>p>2$. As $\beta+1>p$, the first term of the right
side hand in (\ref{EODI}) is negative, we cannot use the procedure above
directly. However, by the fact that $q>\beta+1$, we can still obtain the
desired result. Indeed, by Young's inequality, we have for small $\epsilon>0$
\begin{equation}
\int_\Omega u^{\beta+1}\ \dif x\leq\frac{\epsilon(\beta+1)}{q}\int_\Omega
u^q\ \dif x+C(\epsilon)\frac{q-\beta-1}{q}|\Omega|.
\end{equation}
Choose a suitable $\epsilon$ such that
\begin{equation}
\left(\frac{1}{\beta+1}-\frac{1}{p}\right)\frac{\epsilon(\beta+1)}{q}\geq-\frac{\alpha}{2}\left(\frac{1}{p}-\frac{1}{q}\right).
\end{equation}
Then we have
\begin{equation}
\frac{1}{2p}y'(t)\geq\frac{\alpha}{2}\left(\frac{1}{p}-\frac{1}{q}\right)\int_\Omega
u^q\ \dif
x+C(\epsilon)\left(\frac{1}{\beta+1}-\frac{1}{p}\right)\frac{q-\beta-1}{q}|\Omega|.
\end{equation}
If we assume additionally that $\|u_0\|_{L^2(\Omega)}^2$ is large enough,
then we can derive
\begin{equation}
y'(t)\geq\frac{p\alpha}{2}\left(\frac{1}{p}-\frac{1}{q}\right)|\Omega|^\frac{2-q}{2}y^\frac{q}{2}(t)
\end{equation}
which implies that
\begin{equation}
y(t)\rightarrow+\infty,\ \mbox{as}\ t\rightarrow T_5^*:=4T_1^*.
\end{equation}

The proof of Theorem \ref{BU} is now complete.
\end{proof}

\begin{rem}
Following the same manner as in \cite[Theorem 3.5]{LX}, we can still obtain
the desired blowup results in case $\mathrm{(e)}$ of Theorem \ref{BU} if we
assume that $\alpha>\Lambda_1+1$ instead of (\ref{E0}).
\end{rem}
\begin{rem}
During the proof of Theorem \ref{BU}, we also obtain an upper bound of the blowup time in each case.
\end{rem}

\section{Discussions}\setcounter{equation}{0}
As was shown in the previous sections, the relation of $p,q,\beta$ plays an
important role in determining the properties of the weak solution of
(\ref{IBP}). To be specific, we will state it for $1<p<2$ and $p>2$
respectively. Moreover, we will use two figures to state the results of
blowup, extinction and global existence intuitionally. For simplicity, we
will not point out which domain the boundary lines and the coordinate axis
belong to.

\begin{figure}[!ht]
\begin{center}
\setlength{\unitlength}{1mm}
\begin{picture}(150,75)
\put(8,2){0}
\put(10,5){\vector(1,0){90}} \put(98,7){$ q $}             
\put(10,5){\vector(0,1){60}}  \put(11,65){$ \beta $}      
\multiput(20,5)(10,0){4}{\circle*{1}}
\multiput(10,15)(0,10){4}{\circle*{1}}
\put(30,5){\line(1,1){10}} \put(82,62){$ q=\beta+1 $}    
\put(16,1){\small $ p-1 $}  \put(29,1){1} \put(49,1){2} \put(39,1){$ p $}
\put(0,14){\small $ p-1 $}  \put(7,24){1} \put(7,44){2} \put(7,34){$ p $}
\put(40,5){\line(0,1){60}}
\put(15,25){\shortstack{\large Global\\[5mm] \large Existence}}
\put(60,23){\shortstack{\large Extinction(E2)\\[5mm] \large or\\[5mm] \large
Blowup}}
\linethickness{1.5pt}
\put(50,5){\line(0,1){60}}
\qbezier[10](40,15)(62.5,42.5)(85,55)
\put(35,7){E1}  \put(43,33){E2}
\put(105,25){\shortstack{E1: Extinction for\\[3mm]any initial data\\[5mm] E2:
Extinction for\\[3mm]small initial data}}
\end{picture}
\caption{$1<p<2$}\label{fig1}
\end{center}
\end{figure}
We first discuss the case $1<p<2$ ({\bf Figure \ref{fig1}}). In this case, if
$q>\max\{2,\beta+1\}$ or
$0<\beta\leq p-1,\ q>2$, then finite time blowup will occur for some suitably
large initial data, {see Theorem \ref{LID} and \ref{BU}((a),(c)).} If $q\in(\beta+1,p)$, or $q=\beta+1$, or $q=p$, then
finite time extinction will happen with suitable $\alpha$ and any nontrivial
initial data, see {Theorem \ref{FTE}.} While if $q>p,\beta>0$, then small initial data can lead to
finite time extinction, {see Theorem \ref{FTE2}.} Noticing that if $q>2$, then large initial data can
lead to finite time blowup while small initial data implies finite time
extinction which is interesting.

\begin{figure}[!ht]
\begin{center}
\setlength{\unitlength}{1mm}
\begin{picture}(110,70)
\put(8,2){0}
\put(10,5){\vector(1,0){90}} \put(98,7){$ q $}             
\put(10,5){\vector(0,1){60}}  \put(12,63){$ \beta $}        
\put(40,5){\circle*{1}}  \put(36,1){\small $ p-1 $}       
\put(30,5){\circle*{1}}  \put(29,1){1}                      
\put(50,5){\circle*{1}} \put(49,1){2}                       
\put(60,5){\circle*{1}} \put(59,1){$ p $}                 
\put(10,35){\circle*{1}} \put(0,34){\small $ p-1 $}        
\put(10,25){\circle*{1}} \put(7,24){1}                       
\put(10,45){\circle*{1}} \put(7,44){2}                       
\put(10,55){\circle*{1}} \put(7,54){$ p $}                 
\put(30,5){\line(1,1){55}} \put(82,62){$ q=\beta+1 $}    
\put(60,5){\line(0,1){30}}                                    
\put(50,25){\line(1,0){50}}
\qbezier[50](10,25)(30,25)(50,25)
\put(20,35){\Large Global Existence}         
\put(75,13){\Large Blowup}
\put(40,10){Extinction}
\put(75,30){\shortstack{\Large Decay\\or\\Blowup}}
\put(53,26){\shortstack{\tiny Decay}}
\end{picture}
\caption{$p>2$}\label{fig2}
\end{center}
\end{figure}
Next, let us consider the case $p>2$ ({\bf Figure \ref{fig2}}). In this
case, if $q>\max\{p,\beta+1\}$,
or $q>p,\ 0<\beta\leq q-1$, or $q=p\geq\beta+1>2$, then for some suitably
large initial data, the solution of (\ref{IBP}) will blow up in finite time, {see Theorem \ref{LID} and Theorem \ref{BU}((b),(d),(e),(f)).}
If $q\in(\beta+1,p),\ \beta<1$, or $q=\beta+1<2$, or $q=p$, then finite time
extinction will happen with suitable $\alpha$ and any nontrivial initial
data, see Theorem \ref{FTE}. Besides, if $1\leq\beta\leq q-1$, then as was shown in Theorem
\ref{decay}, the solution of (\ref{IBP}) cannot extinction in finite time,
while it will decay to zero as $t\rightarrow+\infty$ for some suitably small
$u_0$.

We also need to point out that finite time extinction is not a singularity
property for solution of (\ref{IBP}) as $\beta$ and $q-1$ are positive. If
finite time extinction happens, we have in fact shown that the solution of
(\ref{IBP}) is global in time bounded which is also an important property of
the solution of (\ref{IBP}). For the global existence of the weak solution,
we can see from Theorem \ref{GE} that the critical value for $q$ is $p$ if
$1<p<2$. While in the degenerate case, the critical value is $p$ and
$\beta+1$. Moreover, if $q\leq p$ or $q<\beta+1$, then we can obtain the
global existence.
\section*{Acknowledgement}
This work was supported in part
by the National Natural Science Foundation of China (No.
11371286, 11401458), the Special Fund of
Education Department (No. 2013JK0586) and the Youth Natural Science Grant
(No. 2013JQ1015)
of Shaanxi Province of China.

%

\end{document}